\providecommand{\R}{\mathbb R}
\providecommand{\N}{\mathbb N}
\providecommand{\E}{\mathop{\mathsf{E}{}}\nolimits}
\providecommand{\prob}{\mathop{\mathsf{P}{}}\nolimits}
\providecommand{\cov}{\mathop{\mathrm{cov}{}}\nolimits}
\providecommand{\e}{\mathrm{e}}
\providecommand{\abs}[1]{\lvert #1 \rvert}
\newtheorem{thm}{Theorem}
\newtheorem{lem}[thm]{Lemma}
\newtheorem{prop}[thm]{Proposition}
\newtheorem{cor}[thm]{Corollary}
\theoremstyle{definition}
\newtheorem{defi}[thm]{Definition}
\theoremstyle{remark}
\newtheorem*{rem}{Remark}
\newtheorem*{rems}{Remarks}
\title{Cover times and generic chaining}
\author{Joseph Lehec\footnote{Universit\'e Paris-Dauphine}}
\begin{document}

\maketitle

\begin{abstract}
A recent result of Ding, Lee and Peres expresses 
the cover time of the random walk on a graph in terms 
of generic chaining for the commute distance. 
Their proof is very involved and the purpose
of this article is to present a simpler approach to this problem
based on elementary hitting times estimates and
chaining arguments. Unfortunately we fail
 to recover their full result, but not by much.
 \end{abstract}

%
%
\section{Introduction}
 Let $(X_n)_{n\geq 0}$ be an irreducible Markov chain on some
 state space $M$. Given $A\subset M$ let
\[
T (A) = \inf \{ n \geq 0 \colon X_n  \in A\}
\]
be the first time the chain hits $A$ and let
\[
T_{cov} (A) = \sup_{x\in A} T (x) 
\]
be the first time the chain $X$ has visited every point of $A$. 
The cover time of $A$ is by definition
\[
\cov ( A ) = \sup_{x\in A} \bigl( \E_x T_{cov} (A) \bigr)  .
\]
To avoid trivial situations, the chain is 
assumed to be positive recurrent throughout
so that $\cov ( A) <+\infty$ if and only if $A$ is finite. 
\\
Using the strong Markov property it is easily 
seen that given $x,y,z$ in $M$
\[
\E_x T(y) + \E_y T(z)
\]
is the expectation (under $\prob_x$) of the first time
the chain has visited $y$ and $z$ (in this order). 
This implies that
\[
\E_x T(y) + \E_y T(z) \geq \E_x T(z). 
\]
Therefore the commute time
\[
d(x,y)= \E_x T(y) + \E_y T(z) 
\]
is a distance on $M$. 
This article deals with 
the following problem,
dating back to Matthews' article~\cite{MAT}
at least: can $\cov(A)$ be estimated in terms
of metric properties of $(A,d)$?
An arguably definitive answer 
to this question has recently 
be given by Ding Lee and Peres~\cite{DLP}, 
their result is expressed in terms
of generic chaining. 
\subsection*{The generic chaining}
The generic chaining is a tool designed
by Talagrand to estimate suprema of Gaussian processes. 
Let us describe it briefly and refer to the book~\cite{TALA} for details.\\
Throughout we let $(N_n)_{n\geq 0}$ be the following sequence of integers:
\begin{equation}
\label{definitionNn}
N_0 =1 , \ N_n = 2^{2^n} , n\geq 1 .
\end{equation}
Given a set $M$, a sequence $(\mathcal A_n)_{n\geq 0}$ of partitions
of $M$ is called admissible if
 $\mathcal A_{n+1}$ is a refinement of $\mathcal A_n$
 and if  $\abs{ \mathcal A_n } \leq N_n$ for every $n \geq 0$,
 where $\abs{ \mathcal A_n}$ is just the cardinality of $\mathcal A_n$.  
The cardinality condition implies in particular that $\mathcal A_0 = \{M\}$.  
Given a sequence of partition $(\mathcal A_n)_{n\geq 0}$ of
$M$ and $x\in M$ we let $A_n (x)$ be the only element of $\mathcal A_n$
containing $x$.  
\begin{defi}
Let $(M,d)$ be a metric space. Set
\[
\gamma_2 ( M , d ) 
= \inf \Bigl[ \sup_{x\in M} \Bigl( \sum_{n=0}^{+\infty} 2^{n/2} \Delta ( A_n (x) , d ) \Bigr) \Bigr] , 
\]
where the infimum is taken over all admissible partitions $(\mathcal A_n)_{n\geq 0}$
of $M$, and $\Delta ( A,d)$ denotes the diameter of $A$. 
\end{defi}
Recall that a Gaussian process is a family $(T(x))_{x\in M}$ of
random variables such that every linear combination of the variables $T(x)$
is Gaussian. The process is said to be centered if $\E T(x) = 0$ for every $x$. 
The fundamental result of Talagrand reads as follows.
\begin{thm}
Let $(T(x))_{x\in M}$ be a centered Gaussian process. 
Then 
\begin{equation}
\label{talagrand}
\frac 1 L \gamma_2 ( M ,d ) \leq \E \sup_{x\in M} T(x) \leq 
L \gamma_2 ( M,d )
\end{equation}
where $L$ is a universal constant and $d$ is the following distance on $M$
\begin{equation}
\label{L2distance}
d ( x, y ) = \sqrt{ \E (T(x) - T(y))^2} .
\end{equation}
\end{thm}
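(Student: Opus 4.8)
The plan is to establish the two inequalities of \eqref{talagrand} separately; the right-hand (chaining) bound is easy, while the left-hand (lower) bound is the substantial part. We may assume $M$ is finite: both $\gamma_2(M,d)$ and $\E\sup_{x\in M}T(x)$ are suprema of the corresponding quantities over finite subsets, so the general case follows by approximation. It is also harmless to fix $t_0\in M$ and replace $T(x)$ by $T(x)-T(t_0)$, which is again a centered Gaussian process with the same $L^2$ distance $d$ and the same expected supremum, and which now vanishes at $t_0$; this makes $\sup_{x\in H}(\E T(x)^2)^{1/2}=\sup_{x\in H}d(x,t_0)$, a convenient normalisation later.

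For the upper bound, fix an admissible sequence $(\mathcal A_n)_{n\ge 0}$ and choose in each element $A$ of each $\mathcal A_n$ a point $\pi_n(A)\in A$, with $\pi_0(\{M\})=t_0$; write $\pi_n(x)=\pi_n(A_n(x))$, so $\pi_0(x)=t_0$ for all $x$ and $\pi_n(x)=x$ once $n$ is large (here we use that $M$ is finite). Telescoping gives $T(x)=\sum_{n\ge 0}\bigl(T(\pi_{n+1}(x))-T(\pi_n(x))\bigr)$, and since $\pi_{n+1}(x)$ and $\pi_n(x)$ both lie in $A_n(x)$ the $n$-th increment is centered Gaussian with standard deviation at most $\Delta(A_n(x),d)$. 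At level $n$ there are at most $\abs{\mathcal A_{n+1}}\le N_{n+1}=2^{2^{n+1}}$ distinct such increments, and $\sqrt{\log N_{n+1}}\asymp 2^{n/2}$; hence the Gaussian tail bound $\prob(\abs{g}>t\sigma)\le 2\e^{-t^2/2}$, union-bounded over these increments and summed over $n$, produces a universal $u_0$ so that for all $u\ge u_0$
\[
\prob\Bigl(\sup_{x\in M}\sum_{n\ge 0}\abs{T(\pi_{n+1}(x))-T(\pi_n(x))}\ge u\,S\Bigr)\le L\,\e^{-u},\qquad S:=\sup_{x\in M}\sum_{n\ge 0}2^{n/2}\Delta(A_n(x),d).
\]
Integrating in $u$ gives $\E\sup_{x\in M}T(x)\le L\,S$, and taking the infimum over admissible partitions yields $\E\sup_{x\in M}T(x)\le L\,\gamma_2(M,d)$.

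For the lower bound I would run Talagrand's partitioning scheme. The two probabilistic inputs are \emph{Sudakov minoration} --- if $t_1,\dots,t_m\in M$ are pairwise at distance at least $a$ then $\E\sup_{i\le m}T(t_i)\ge a\sqrt{\log m}/L$, obtained by comparing via Slepian's lemma with suitably rescaled independent Gaussians --- and the \emph{Gaussian concentration} inequality for suprema of Gaussian processes, which bounds the fluctuations of $\sup_{x\in H}T(x)$ around its mean in terms of $\sup_{x\in H}d(x,t_0)$, and, applied to an increment process $(T(x)-T(t_\ell))_{x\in H}$, in terms of $\Delta(H,d)$. From these one derives a \emph{growth condition} for the functional $F(H)=\E\sup_{x\in H}T(x)$: if $H$ contains $m$ points that are $a$-separated and $H_1,\dots,H_m$ are the intersections of $H$ with balls of radius $a/8$ (say) about these points, then $F(H)\ge \tfrac1L\,a\sqrt{\log m}+\min_{\ell}F(H_\ell)$ --- it is precisely concentration that lets the term $\min_\ell F(H_\ell)$ be added rather than just the Sudakov term. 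One then builds the partitions recursively from $\mathcal A_0=\{M\}$, splitting each current piece along a maximal separated family at a geometrically decreasing scale whenever that family is rich enough and leaving it unsplit otherwise; since $N_{n+1}=N_n^2$, splitting each of the $N_n$ pieces into up to $N_n$ sub-pieces keeps the sequence admissible, while $\sqrt{\log N_n}\asymp 2^{n/2}$ makes the Sudakov gain at each split match the weight $2^{n/2}\Delta(A_n(x),d)$. A bookkeeping argument then turns the growth condition, summed along the branch of partitions leading to any fixed $x$, into $\sum_{n\ge 0}2^{n/2}\Delta(A_n(x),d)\le L\,F(M)=L\,\E\sup_{x\in M}T(x)$, uniformly in $x$, which is the claim.

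The whole difficulty is in this second part, and within it in two places: verifying the growth condition for $F$ --- the concentration argument producing the $\min_\ell F(H_\ell)$ term must be done on a carefully chosen event on which every $\sup_{x\in H_\ell}T(x)$ stays near its mean, and one must fix the ratio between the separation scale and the radii of the balls so that the Sudakov term dominates the fluctuations --- and the bookkeeping of the recursive construction, ensuring the cardinalities never exceed $N_n$ while the weighted diameters telescope cleanly against $F(M)$, including the case of pieces that carry no rich separated family at the current scale. The chaining (upper) bound, by comparison, is a routine union bound.
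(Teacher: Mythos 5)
Your outline is correct and follows exactly the route the paper has in mind: the paper gives no proof of this theorem, citing Talagrand's book and noting only that the upper bound is a union bound applied at different scales to the Gaussian increments, while the lower bound rests on Sudakov minoration (together with Gaussian concentration) fed into the partitioning/growth-functional scheme. Your sketch reproduces this standard argument, with the hard work (verifying the growth condition and the recursive bookkeeping) correctly identified but, as in the paper, deferred to Talagrand's treatment.
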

The upper bound is not
specific to Gaussian processes,
it applies to any
centered process $(T(x))_{x\in M}$ satisfying
\begin{equation}
\label{gaussian-deviation}
\prob ( T(x) - T(y) \geq u ) \leq \e^{ - u^2 / 2 d(x,y)^2}
\end{equation}
for all $x,y\in M$, for all $u>0$ and for some distance $d$. 
Using a union bound it is not hard to see that a centered process 
satisfying~\eqref{gaussian-deviation} satisfy
\begin{equation}
\label{majoration-gaussienne}
\E \sup_{x\in A} T(x) \leq C \sqrt{ \log \abs{ A } } \max_{x,y \in A} d(x,y) , 
\end{equation}
for every finite subset $A$ of $M$. The proof
of the upper bound of~\eqref{talagrand}
consists in applying this union 
bound repeatedly and at different scales. 
\\
The lower bound is another story, it is 
specific to Gaussian processes 
and much more difficult to prove. 
The key estimate is the Sudakov inequality:
if $(T(x))_{x\in M}$ is a centered Gaussian
process then for all finite 
subset $A$ of $M$ 
\begin{equation}
\label{sudakov}
\E \sup_{x\in A} T(x) \geq  c \sqrt{\log\abs{A}} \min_{x\neq y \in A} d(x,y) 
\end{equation}
where $c$ is a universal constant and $d$ is the $L^2$-distance~\eqref{L2distance}. 
\subsection*{The Ding, Lee and Peres theorem}
Cover times satisfy inequalities analogue to~\eqref{majoration-gaussienne} and~\eqref{sudakov}
due to Matthews~\cite{MAT}: for any finite subset $A$ of $M$
\begin{equation}
\label{matthews-eq}
\begin{split}
\cov ( A ) & \leq ( 1 + \log  \abs{A}  ) \max_{x,y \in A} \bigl( \E_x T(y) \bigr)\\
\cov ( A ) & \geq \log \abs{A}  \min_{x\neq y \in A} \bigl( \E_x T(y) \bigr). 
\end{split}
\end{equation}
In view of these inequalities it seems natural to conjecture that the correct order of magnitude
for $\cov ( A )$ is 
\[
\gamma_1 ( A , d ) 
= \inf \Bigl[ \sup_{x\in A} \Bigl( \sum_{n=0}^{+\infty} 2^{n} \Delta ( A_n (x) , d ) \Bigr) \Bigr] , 
\]
rather than $\gamma_2 ( A, d )$ (recall that $d$ is the commute
distance $d(x,y)= \E_x T(y) + \E_y T(z)$). 
This is not quite correct. Here is the result of Ding, Lee and Peres.
\begin{thm}
\label{DLPthm}
If the Markov chain $X$ is reversible (and if the state space $M$ is finite) then
\[
\frac 1 L \big[ \gamma_2 ( M , \sqrt{d} ) \bigr]^2
\leq \cov ( M ) 
\leq L   \big[ \gamma_2 ( M , \sqrt{d} ) \bigr]^2
\]
for some universal constant $L$. 
\end{thm}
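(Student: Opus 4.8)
The plan is to obtain the two bounds of Theorem~\ref{DLPthm} by a direct chaining argument for the upper bound and a Sudakov-type minoration for the lower bound, using hitting-time estimates in place of the Gaussian tail and Sudakov inequalities.

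\subsection*{Plan of proof}

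The plan is to establish the two halves of Theorem~\ref{DLPthm} separately, viewing them as the cover-time analogues of the two halves of Talagrand's theorem~\eqref{talagrand}, with Matthews' estimates~\eqref{matthews-eq} in the roles of the union bound~\eqref{majoration-gaussienne} and of Sudakov's inequality~\eqref{sudakov}. Both directions start from the same metric reduction. Reversibility makes the commute distance proportional to the effective resistance, $d(x,y)=c_{\mathrm{tot}}\,R_{\mathrm{eff}}(x,y)$ with $c_{\mathrm{tot}}$ the total conductance, and the resistance metric embeds isometrically in a Hilbert space; e.g.\ the root-pinned Gaussian free field $\phi=(\phi_x)_{x\in M}$ satisfies $\E(\phi_x-\phi_y)^2=R_{\mathrm{eff}}(x,y)$. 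Hence $\sqrt d$ is a genuine metric, $\Delta(A,\sqrt d)=\sqrt{\Delta(A,d)}$ for every $A\subset M$, and Talagrand's theorem applied to $\phi$ gives $\E\sup_x\phi_x\asymp\gamma_2(M,\sqrt d)/\sqrt{c_{\mathrm{tot}}}$. One can then deduce Theorem~\ref{DLPthm} from the generalised Ray--Knight isomorphism, which compares the local-time field of the walk run to an inverse local time at the root with $\tfrac12(\phi+\text{const})^2$ and yields $\cov(M)\asymp c_{\mathrm{tot}}\,(\E\sup_x\phi_x)^2\asymp[\gamma_2(M,\sqrt d)]^2$. The aim here is to avoid the isomorphism theorem and argue directly, conceding a little.

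From the chaining viewpoint the square is forced by the mismatch between~\eqref{matthews-eq} and~\eqref{majoration-gaussienne}: on $N_n$ points a Gaussian union bound costs $\sqrt{\log N_n}=2^{n/2}$ times a distance, a cover time costs $\log N_n=2^n$ times a distance. So a chain of Gaussian contributions $2^{n/2}\sqrt{\Delta(A_n(x),d)}$ matches termwise the covering contributions $2^n\Delta(A_n(x),d)$, and along a roughly geometric branch of the optimal tree $\sum_n 2^n\Delta(A_n(x),d)\asymp\bigl(\sum_n 2^{n/2}\sqrt{\Delta(A_n(x),d)}\bigr)^2$; that the branch need not be geometric is exactly why $[\gamma_2(M,\sqrt d)]^2$, not $\gamma_1(M,d)$, is correct.

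For the upper bound I would fix a near-optimal admissible sequence $(\mathcal A_n)_{n\ge0}$ for $\gamma_2(M,\sqrt d)$, choose one representative per cell to form nested nets $S_0\subset S_1\subset\cdots$ with $\abs{S_n}\le N_n$ (so $S_n=M$ once $N_n\ge\abs M$), and bound $\E_xT_{cov}(S_n)$ for the finitely many relevant $n$ by a multiscale scheme patterned on the proof of the upper bound in~\eqref{talagrand}. Covering $S_{n+1}$ once $S_n$ is covered should be done cell by cell: inside each $A\in\mathcal A_n$ a short sub-walk, started near the representative of $A$, reaches the $\le N_{n+1}/N_n$ points of $S_{n+1}\cap A$, and Matthews localised to $A$ replaces the global diameter by $\Delta(A,d)$, costing $\sim 2^n\Delta(A,d)$. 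Summed crudely over scales this gives only $\sup_x\sum_n 2^n\Delta(A_n(x),d)=\gamma_1(M,d)$, which is too large; the substance of the proof is to show that the excursions used at successive scales overlap enough that the accumulated cost collapses to the square of the single sum $\sup_x\sum_n 2^{n/2}\sqrt{\Delta(A_n(x),d)}$. Extracting this square without the isomorphism theorem is, I expect, the main obstacle, and is plausibly the point where one is forced to lose a bounded power of $\log\log\abs M$, consistent with the paper's stated shortfall.

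For the lower bound I would transport Talagrand's majorising-measure/tree argument for the lower half of~\eqref{talagrand}, with the Matthews minoration in~\eqref{matthews-eq} replacing Sudakov's inequality at each scale. The engine should be a growth inequality of the form: if $B\subset M$ contains subsets $B_1,\dots,B_m$ pairwise $d$-separated by at least $r$, then $\cov(B)\gtrsim(\log m)\,r+\min_i\cov(B_i)$ --- the first term from~\eqref{matthews-eq} applied to a transversal, the second because the walk must finish covering whichever $B_i$ it finishes last. Two points need care: converting the commute separation $d$ into the one-sided expected hitting times that appear in~\eqref{matthews-eq} (doable for reversible chains, up to a bounded factor), and a localisation estimate ensuring that the time genuinely devoted to covering $B_i$, although it may be spread over many excursions into $B_i$, is still a constant multiple of $\cov(B_i)$; the latter, via a last-exit/excursion decomposition and elementary reversibility-based hitting estimates, is the delicate part. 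Granting the growth inequality, iterating along the Talagrand tree and invoking the geometric-branch remark to turn the nested sum of $(\log m)\,r$ terms into a square recovers $\tfrac1L[\gamma_2(M,\sqrt d)]^2$. I would rest the whole thing on two elementary lemmas --- a ``commute through a region'' bound refining $\E_xT(y)\le d(x,y)$ to stay within a cell, and a lower bound on the probability of a long excursion --- and treat the generic-chaining bookkeeping as routine.
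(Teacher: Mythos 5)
Your plan does not prove the theorem; both halves have genuine gaps, and the fatal one is in the lower bound. The route you propose there --- a growth inequality $\cov(B)\gtrsim (\log m)\,r+\min_i \cov(B_i)$ obtained from Matthews' minoration, iterated along Talagrand's partition tree --- is exactly the elementary route taken in this paper (Lemmas~\ref{matthews1}--\ref{matthews3}, Corollary~\ref{rev-cor} for converting commute-distance separation into one-sided hitting-time separation, and Theorem~\ref{growth-thm}), but what it yields is only $\gamma_1(M,d)\leq L\,\cov(M)$, i.e.\ Theorem~\ref{main-lower}, not the DLP lower bound. Your final step, ``invoking the geometric-branch remark to turn the nested sum of $(\log m)\,r$ terms into a square,'' is not valid: the iterated Matthews increments reconstruct $\gamma_1(M,d)$, and $\gamma_1(M,d)$ can be genuinely smaller than $[\gamma_2(M,\sqrt d)]^2$, by a factor of order $\log\log\abs{M}$, even for a reversible chain. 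The tree with edge multiplicities in the appendix realises this gap: there $\Delta(A_n(x),d)\approx E\,2^{-n}$ over $D$ scales, so the branch is precisely \emph{not} geometric, $\gamma_1\approx D\cdot E$ while $[\gamma_2(\cdot,\sqrt d)]^2\approx D^2E$. So no argument whose output is the $\gamma_1$ functional can give $\frac1L[\gamma_2(M,\sqrt d)]^2\leq\cov(M)$; the known proof of that inequality (Ding--Lee--Peres) goes through the Gaussian free field and the Ray--Knight isomorphism, which you explicitly set out to avoid, and the present paper concedes exactly this loss.

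On the upper bound you misplace the difficulty: the paper proves the full inequality $\cov(M)\leq L[\gamma_2(M,\sqrt d)]^2$ by elementary means (even without reversibility), so there is no $\log\log$ shortfall on that side. Your cell-by-cell Matthews scheme does not deliver it: Matthews' bound controls expectations, and summing expected costs over the cells of $\mathcal A_n$ gives $\sum_{A\in\mathcal A_n}2^n\Delta(A,d)$ per scale, not $\sup_x\sum_n 2^n\Delta(A_n(x),d)$, and in any case you acknowledge you do not know how to ``collapse to the square.'' The missing ingredient is a sub-Gaussian deviation estimate in the metric $\sqrt d$ playing the role of~\eqref{gaussian-deviation}: the Kahn--Kim--Lov\'asz--Vu bound (Lemma~\ref{KKLV}) on the number of visits to $y$ between successive returns to $x$, which yields the key estimate~\eqref{majoration-hitting} for events of the form $\{T^l(y)>T^k(x)\}$. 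With return-time thresholds $k_n(x)$ calibrated by the chaining radii $r_n(x)$, a union bound over scales shows the walk covers $A$ by time $T^{k_0}(z)$ with probability $7/8$, and Lemma~\ref{BDNPlemma} (a Wald-identity argument) converts this into the expectation bound with the square of $\sup_x\sum_n 2^{n/2}\sqrt{\Delta(A_n(x),d)}$. Without these two lemmas, or substitutes for them, your upper-bound plan does not close.
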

\begin{rem}
Actually the inequality remains valid 
when $M$ is infinite. Indeed since $d(x,y)\geq 1$ 
when $x\neq y$, we then have $\gamma_2 ( M , \sqrt{d} ) = +\infty$. 
\end{rem}
The correct order of magnitude $\gamma_2 ( M , \sqrt{d} )^2$ is comparable
to our wrong guess: clearly
\[
\gamma_1 ( M , d) \leq \bigl[ \gamma_2 ( M ,\sqrt{d} ) \bigr]^2 . 
\]
\subsection*{Purpose of the present article}
%
%
%
The proof of Ding, Lee and Peres is very involved. 
In particular it relies on the Ray-Knight isomorphism
theorem which makes a connection 
between local times of the chain 
and the Gaussian free field
associated to the chain. 
It may be interesting  to 
have a simpler proof 
relying only on elementary
hitting times estimates and on Talagrand's generic chaining. 
The purpose of this article is to provide such a proof. 
\\
Unfortunately we fail to recover entirely Theorem~\ref{DLPthm}, here is what we prove. 
\begin{thm}
\label{main-upper}
If $X$ is irreducible and positive recurrent, then
\begin{equation}
\label{non-heredite}
\cov ( M ) \leq L \bigl[ \gamma_2 ( M , \sqrt{d} ) \bigr]^2 . 
\end{equation}
for some universal $L$. More generally we have
\begin{equation}
\label{heredite}
\cov ( A ) \leq L \bigl[ \gamma_2 ( A , \sqrt{d} ) \bigr]^2 , 
\end{equation}
for every subset $A$ of $M$. 
\end{thm}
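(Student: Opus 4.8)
The plan is to run a chaining argument directly on the hitting-time functionals, mimicking the proof of the upper bound in Talagrand's theorem~\eqref{talagrand}, but feeding in the Matthews bound~\eqref{matthews-eq} in place of the Gaussian union bound~\eqref{majoration-gaussienne}. First I would reduce to the case of a finite set $A$: the general inequality~\eqref{heredite} follows from the finite case by monotone convergence, since $\cov(A) = \sup \cov(A')$ over finite $A' \subset A$ and $\gamma_2$ is monotone under inclusion. So fix a finite $A \subset M$, fix an admissible sequence of partitions $(\mathcal A_n)_{n \ge 0}$ of $A$, and in each piece $A \in \mathcal A_n$ choose a distinguished point; write $\pi_n(x)$ for the distinguished point of $A_n(x)$. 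Since $A$ is finite, the partitions are eventually the singletons, so $\pi_n(x) = x$ for $n$ large, and one has the telescoping chain $x = \pi_{N}(x), \pi_{N-1}(x), \dots, \pi_1(x), \pi_0(x)$ linking $x$ back to the single distinguished point $o = \pi_0$ of $A_0(x) = A$.

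The heart of the argument is a "chained Matthews" estimate. The key elementary fact I expect to need is a quantitative hitting-time bound at each scale: if $B$ is a set of diameter $\delta$ in the commute metric $d$, then starting anywhere in (a neighbourhood of) $B$, the expected time to cover $B$ is controlled by something like $(\log|B|)\,\delta$ together with a term that lets one pass from one scale to the next. Concretely I would try to show, for a distinguished point $p$ of a scale-$n$ piece $B$ with children $B' \in \mathcal A_{n+1}$ having distinguished points $p'$, an inequality of the shape
\[
\E_p T_{cov}(B) \le \max_{B' } \Bigl( \E_{p} T(p') + \E_{p'} T_{cov}(B') \Bigr) + (\text{lower-order}),
\]
and then iterate it down through the scales. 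Summing the contributions $\E_{p}T(p')$ along a branch of the partition tree produces a sum of commute distances $d(\pi_{n}(x), \pi_{n+1}(x))$; because there are at most $N_{n+1} \le 2^{2^{n+1}}$ pieces at scale $n+1$, a union bound over branches costs a factor $\log N_{n+1} \approx 2^{n+1}$, and Matthews' inequality converts "expected cover time of $N$ points at mutual distance $\le \delta$" into "$(\log N)\,\delta \approx 2^{n}\delta$". The Cauchy--Schwarz inequality $2^n \delta = 2^{n/2}\cdot 2^{n/2}\delta \le \tfrac12(2^n \cdot (\text{const})^2 + (2^{n/2}\delta)^2/(\text{const})^2)$ is what turns a $\gamma_1$-type sum $\sum 2^n \Delta(A_n(x))$ into the square of a $\gamma_2$-type sum; more precisely, writing $S(x) = \sum_n 2^{n/2}\Delta(A_n(x),d)$, one has $\sum_n 2^n \Delta(A_n(x),d) \le S(x) \cdot \sup_x S(x)$ by Cauchy--Schwarz at each term, hence $\cov(A) \lesssim (\sup_x S(x))^2$, and optimizing over admissible partitions gives~\eqref{heredite}. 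Taking $A = M$ yields~\eqref{non-heredite}.

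The step I expect to be the main obstacle is making the single-scale recursion honest, i.e.\ controlling the expected cover time of a piece $B$ by the max over its children of (hop to the child's center) $+$ (cover time of the child), uniformly in the starting point and without the errors accumulating badly over infinitely many scales. The difficulty is that $T_{cov}(B)$ is a supremum, not a sum, so one cannot simply add conditional expectations; one needs a genuine hitting-time lemma — presumably something like: from any state, the expected time to reach a target set $F$ and then cover it is at most $\max_{f \in F}\E T(f)$ plus $\cov(F)$, combined with the strong Markov property and a careful choice of which intermediate points to route through. There is also the familiar subtlety that after covering a child $B'$ the walk may be far from the next child, so the "hop" terms must be bounded by diameters at the \emph{coarser} scale — this is exactly why the $2^{n}\Delta(A_n(x))$ (rather than $2^{n}\Delta(A_{n+1}(x))$) sum appears, and why we recover $\gamma_1(M,d) \le \gamma_2(M,\sqrt d)^2$ rather than the sharp two-sided bound of Theorem~\ref{DLPthm}. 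I would expect the reversibility hypothesis of Ding--Lee--Peres to be unnecessary for this direction, which is consistent with the statement of Theorem~\ref{main-upper} making no such assumption.
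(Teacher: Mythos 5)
There is a genuine gap, and in fact two. First, your single--scale recursion cannot be iterated at the level of expectations. Covering a piece $B$ means covering \emph{all} of its children, so the honest version of your ``chained Matthews'' step is a group Matthews bound of the form
\[
\E_p T_{cov}(B) \;\leq\; C\log m \,\max_{B'}\Bigl( \max_{x\in B}\E_x T(p') + \E_{p'} T_{cov}(B') \Bigr),
\]
in which the factor $\log m \approx 2^{n}$ multiplies not only the hop term but also the children's cover times. Iterating over the scales then produces a product $\prod_{k\leq n} 2^{k} = 2^{\Theta(n^2)}$, which is far too large; you identify this as the main obstacle but propose no mechanism that removes it, and none exists at the level of expectations. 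The paper's proof avoids expectations entirely: it chains on sample paths by counting returns, using the Kahn--Kim--Lov\'asz--Vu concentration estimate for the (geometric) local--time increments $N_{T^k(x)}(y)$ to show that with probability at least $7/8$ the events $T^{k_{n+1}(x)}(x_{n+1}) \leq T^{k_n(x)}(x_n)$ hold simultaneously for all $x$ and $n$ (the doubly exponential failure probabilities make the union bound over the $N_nN_{n+1}$ pairs affordable), and only converts to an expectation at the very end through the Barlow--Ding--Nachmias--Peres lemma $\E_z T_{cov}(A) \leq k/\bigl(\pi(z)\prob_z(T_{cov}(A)\leq T^k(z))\bigr)$.

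Second, and more decisively, the intermediate statement you aim for is false. Your plan is to prove $\cov(A) \lesssim \sup_{x}\sum_n 2^{n}\Delta(A_n(x),d)$ for every admissible partition and then apply Cauchy--Schwarz; taking the infimum over partitions, this would give $\cov(A) \leq L\,\gamma_1(A,d)$. But the example in the appendix, combined with the Ding--Lee--Peres lower bound, shows that $\cov(M)$ can exceed $\gamma_1(M,d)$ by a factor of order $\log\log\abs{M}$ (there $\gamma_1(M,d)\approx DE$ while $\cov(M)\approx \gamma_2(M,\sqrt d)^2\approx D^2E$). So no chaining argument can deliver the $\gamma_1$-type bound you want to pass through. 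The correct proof must produce the \emph{square} of the $\gamma_2(\sqrt d)$ sum directly; in the paper this is encoded in the choice of return numbers $k_n(x) = \lfloor 34\,\pi(x_n) r_n(x) r_0(x)\rfloor +1$, whose product structure (tail sum $r_n(x)$ times full sum $r_0(x)$) is what makes the contributions across scales add up to $r_0(x)^2$ rather than to a $\gamma_1$ functional. (Your reduction to finite $A$ and the observation that reversibility is not needed are fine, and the formula for $S(x)$ should carry $\sqrt{\Delta(A_n(x),d)}$ rather than $\Delta(A_n(x),d)$, but these are side issues.)
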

Inequality~\eqref{non-heredite} is slightly
stronger than the upper bound of Theorem~\ref{DLPthm} 
since the chain is no longer assumed to be reversible. 
Besides, it is not clear whether the approach of Ding, Lee
and Peres yields~\eqref{heredite}. 
\begin{thm}
\label{main-lower}
If in addition the chain $X$ is reversible then
\begin{equation}
\label{main-lower-eq}
\gamma_1 ( M , d ) \leq L \cov (M) ,
\end{equation}
where $L$ is a universal constant. Again 
we actually have
\[
\gamma_1 ( A , d ) \leq L \cov ( A ) ,
\] 
for every $A \subset M$.
\end{thm}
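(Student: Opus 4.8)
The plan is to prove the lower bound $\gamma_1(A,d)\le L\,\cov(A)$ by exhibiting, for every admissible sequence of partitions, a point $x$ for which the chaining sum $\sum_n 2^n\Delta(A_n(x),d)$ is controlled by the cover time, i.e.\ by running the ``growth of a tree'' argument that underlies the lower bound in generic chaining (the partitioning scheme of Talagrand) but with the Matthews lower bound~\eqref{matthews-eq} playing the role of Sudakov's inequality~\eqref{sudakov}. Concretely, I would first record the key functional inequality that drives everything: if $A_1,\dots,A_m$ are subsets of $M$ that are pairwise $r$-separated for the commute distance $d$ (meaning $d(x,y)\ge r$ whenever $x,y$ lie in distinct $A_i$), then
\begin{equation}
\label{superadditivity}
\cov\Bigl(\bigcup_{i=1}^m A_i\Bigr)\ \ge\ c\,r\log m\ +\ \min_{1\le i\le m}\cov(A_i).
\end{equation}
The first term comes from the fact that to cover the union the walk must visit all $m$ blocks, and moving between two of them costs commute time at least $r$, so by a Matthews-type argument (applied to one representative point per block) the inter-block travel contributes $\gtrsim r\log m$; the second term is simply that the walk must, in particular, cover whichever block it enters last, and by the strong Markov property and positive recurrence this costs at least $\min_i\cov(A_i)$ in expectation, essentially independently of the travel already done. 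Making the ``essentially independent'' precise is where the reversibility hypothesis will be used, via hitting-time identities (e.g.\ $\E_x T(y)+\E_y T(x)$ controlled by effective resistance, and the attendant commute-time/return-time estimates) to decouple the last-block cover time from the location history.

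Granting~\eqref{superadditivity}, the second step is the standard generic-chaining bookkeeping. Fix any admissible sequence $(\mathcal A_n)$ of partitions of $A$. I would define a functional $F_n$ on subsets of $A$ by $F_n(B)=\cov(B)$ and verify that the pair $(F_n)$ satisfies the ``growth condition'' of Talagrand's partitioning theorem at the exponent associated with $\gamma_1$ rather than $\gamma_2$: using~\eqref{superadditivity} with $m$ of order $N_{n+1}/N_n=N_n^2$ blocks, each $r$-separated with $r\sim 2^{-n}\Delta$, one gets a gain of $\log m\sim 2^{n}$ in the $F$-value when passing from scale $n$ to scale $n+1$ — exactly the $2^n$ weight appearing in $\gamma_1$. (This is the point where cover times behave like the \emph{square} of a Gaussian process: Sudakov gives $\sqrt{\log m}$ but Matthews gives $\log m$, which is why the correct comparison object is $\gamma_1$ for $d$, equivalently $\gamma_2$ for $\sqrt d$.) Running the resulting tree construction produces a descending chain $A=A_0(x)\supset A_1(x)\supset\cdots$ and the bound $\sum_{n\ge 0}2^n\Delta(A_n(x),d)\le L\,\cov(A)$; taking the infimum over admissible partitions gives $\gamma_1(A,d)\le L\,\cov(A)$. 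The passage from $A=M$ to general $A\subset M$ is automatic here since $\cov(A)$ and the commute distance on $A$ are defined intrinsically and the argument never referred to $M\setminus A$ — this is precisely the hereditary feature advertised after the statement.

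The main obstacle, I expect, is establishing~\eqref{superadditivity} with genuinely additive (not merely comparable-up-to-a-constant-factor-that-degrades-with-$n$) control, since a naive union bound would lose a factor at each of the $\sim\log\log$ scales and destroy the estimate. The delicate point is the decoupling in the second term: after the walk has paid $\gtrsim r\log m$ to tour the blocks, it enters some last block $A_{i_0}$ at an essentially arbitrary (adversarial) boundary point, and one must argue that covering $A_{i_0}$ from there still costs $\gtrsim\cov(A_{i_0})$ rather than being ``discounted'' because the walk is already near $A_{i_0}$. Here I would lean on two facts available under reversibility: first, that $\E_x T(y)\le d(x,y)$ always, so entering near $A_{i_0}$ cannot have hurt the \emph{other} blocks' accounting; and second, a regeneration/Markov argument showing $\E_x T_{cov}(A_{i_0})\ge \cov(A_{i_0})-O(\text{diam}(A_{i_0}))$ uniformly in the entry point $x$, the error being absorbed because $\text{diam}(A_{i_0})$ is at the \emph{finer} scale $2^{-(n+1)}\Delta\ll r$. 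Getting this uniform-in-entry-point lower bound cleanly, without the Ray–Knight isomorphism that Ding–Lee–Peres use, is the crux; everything else is the generic-chaining machinery of~\cite{TALA} applied with the exponent shifted from $\gamma_2$ to $\gamma_1$.
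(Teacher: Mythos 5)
Your overall strategy is the same as the paper's: treat the cover time as a functional on subsets, prove a Matthews-type superadditivity inequality playing the role of Sudakov, and feed it into Talagrand's growth-condition/partitioning theorem at the $\gamma_1$ exponent (the paper does exactly this, working with $\cov_-(A)=\min_{x\in A}\E_x T_{cov}(A)$, which is monotone and dodges the adversarial-entry-point issue that you instead handle via $\E_x T_{cov}(A)\ge \cov(A)-\Delta(A,d)$; both fixes work and only need the strong Markov property). However, there is a genuine gap at your central inequality~\eqref{superadditivity}: Matthews' argument (Lemmas~\ref{matthews1}--\ref{matthews2}) requires \emph{one-way} hitting-time separation $\E_x T(y)\ge a$ between points of distinct blocks, whereas your hypothesis is only separation in the \emph{commute} distance $d(x,y)=\E_xT(y)+\E_yT(x)\ge a$, which is compatible with one of the two hitting times being tiny (a star graph already gives $d(x,y)\approx 2L$ with $\E_xT(y)=1$). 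You never bridge this, and you misattribute the role of reversibility: the "decoupling of the last block" you worry about is pure strong Markov and needs no reversibility, while your "first fact" $\E_xT(y)\le d(x,y)$ is trivially true for any chain and gives no lower bound, so it cannot help. Reversibility enters precisely to convert two-sided separation into one-sided separation: the paper uses the cycle identity $\E_{x_1}T(x_2)+\dots+\E_{x_m}T(x_1)=\E_{x_1}T(x_m)+\dots+\E_{x_2}T(x_1)$ together with a path-length/independent-set argument (Corollary~\ref{rev-cor}), crucially exploiting the diameter upper bound $\Delta(\cup_i H_i)\le ra$ from the growth condition, to extract a proportion at least $1/33$ of the block representatives with pairwise $\E_{x_i}T(x_j)\ge a/4$ (and hence $\ge a/8$ between the corresponding blocks, using the fine-scale diameter bound). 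The loss of a constant fraction of the blocks is harmless because $m=N_{n+\tau}$ is doubly exponential in $n$, so $\log(m/33)\gtrsim 2^n$ still. Without this step your inequality~\eqref{superadditivity} is unproved (and, as stated for general chains, false — the directed cycle in the paper's remark is a counterexample), so the proposal as written does not close.
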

\begin{rem}
The reversibility assumption is necessary. Indeed,
consider the discrete torus $\mathbb Z _N$ and the Markov
kernel given by
\[
P ( x , x+1 ) = 1 , \quad \forall x\in\mathbb Z_N . 
\]
Clearly $d(x,y) = N$ for all $x\neq y$, which implies that
\[
\gamma_1 ( T,d ) \approx N \log( N) .
\]
On the other hand $T_{cov} ( \mathbb Z_N ) = N$ p.s. (whatever the starting point).
\end{rem}
Since $\gamma_1 ( M,d) \leq  [ \gamma_2 ( M, \sqrt{d} ) ]^2$ 
inequality~\eqref{main-lower-eq} is weaker than the lower bound of Theorem~\ref{DLPthm}.
Let us comment a little bit more on this. 
In order to compute $\gamma_1 ( M ,d ) $ one can restrict to 
partitions $(\mathcal A_n)_{n\geq 0}$ satisfying
\[
\mathcal A_n  = \{ \{x\} , \ x\in M\} 
\]
for $n\geq k$, where $k$ is the only integer satisfying
\[
N_{k-1}  < \abs{M} \leq N_k . 
\]
Then by convexity we get
\[
\begin{split}
\Bigl( \sum_{n=0}^\infty 2^{n/2} \sqrt{ \Delta ( A_n(x) , d) }  \Bigr)^2
 & = \Bigl( \sum_{n=0}^k 2^{n/2} \sqrt{ \Delta ( A_n(x) , d)  } \Bigr)^2 \\
 & \leq (k+1) \sum_{n=0}^\infty 2^{n} \Delta ( A_n(x) , d) 
 \end{split}
\]
for every $x\in M$, yielding
\[
\bigl[ \gamma_2 ( M , \sqrt{d} ) \bigr]^2 
\leq C \log ( \log \abs{M}  ) \gamma_1 ( M ,d)
\]
for some universal $C$ (provided $\abs{M} \geq 3$).
Therefore the estimate~\eqref{main-lower-eq} 
is off the correct order
of magnitude by at most a factor $\log ( \log  \abs{M}  )$.  
This is sharp, there is 
a Markov chain for which the gap is indeed $\log ( \log  \abs{M}  )$
(see the appendix).
%
\section{The upper bound}
Since $(X_n)_{n\geq 0}$ is 
an irreducible, positive recurrent Markov chain,
there is a unique invariant 
probability measure which we denote by $\pi$. 
The purpose of this section is to bound 
\[
\E \sup_{x\in M} T (x) 
\]
through a chaining argument. Since
no estimate such as~\eqref{gaussian-deviation}
is available for hitting times, the chaining 
procedure will be different from Talagrand's,
and is taken from the articles~\cite{BDNP,KKLV}. 
\\
We need a couple of additional notations. 
Recall that 
\[
T(x) = \inf( n \geq 0 , \ X_n = x )
\]
is the hitting time of $x$. 
Let 
\[
T^1 (x) = \inf( n \geq 1 , \ X_n = x ) 
\]
be the first return time to $x$ and for $k\geq 2$ define inductively
the $k$-th return time to $x$ by
\[
T^k (x) = \inf ( n \geq T^{k-1} (x)  +1 , \ X_n = x ) .
\]
We also let $T^0 (x) =0$ by convention. 
Lastly, let 
\[
N_k = \sum_{n=0}^{k-1} \delta_{X_n} 
\]
be the empirical measure of the chain $X$. In other
words $N_k(x)$ is the number of visits to $x$ before time $k$.
\\
The following deviation estimate is 
due to Kahn, Kim, Lovasz and Vu~\cite{KKLV}.
\begin{lem}
\label{KKLV}
Let $x\neq y$ in $M$. Then for every $\epsilon >0$
and for every integer $k$
\[
\prob_x \Bigl( N_{T^k (x)} (y) \leq (1-\epsilon) \frac{k \pi(y)}{\pi(x)} \Bigr)
\leq 
\exp\Bigl[ - \frac{\epsilon^2 k}{4 \pi(x) d(x,y)} \Bigr] .
\]
\end{lem}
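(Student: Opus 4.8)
The plan is to control the number of visits to $y$ between successive returns to $x$ via a comparison with a geometric random variable, and then apply a standard exponential moment bound. First I would decompose the empirical count: writing $V_j$ for the number of visits to $y$ during the $j$-th excursion from $x$ to itself (i.e. between $T^{j-1}(x)$ and $T^j(x)$), the strong Markov property shows that under $\prob_x$ the variables $V_1,\dots,V_k$ are i.i.d., and $N_{T^k(x)}(y) = \sum_{j=1}^k V_j$. So the event in question is a large-deviation event for a sum of i.i.d.\ nonnegative integer-valued random variables, and it suffices to estimate the Laplace transform $\E_x \e^{-\lambda V_1}$ for $\lambda>0$ and optimize.

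The key computation is to identify the law of $V_1$. During one excursion from $x$, the chain either never hits $y$ (which happens with some probability $1-p$) or it hits $y$; once at $y$, the number of further visits to $y$ before returning to $x$ is geometric, with a parameter governed by the probability $q$ that, starting from $y$, the chain returns to $y$ before hitting $x$. A short identity (Kac's formula, or a direct excursion count) gives $\E_x V_1 = \pi(y)/\pi(x)$, which is exactly the mean appearing in the statement; more precisely, $V_1$ is a mixture of $0$ (with probability $1-p$) and $1 + \mathrm{Geom}$ (with probability $p$), and $p/(1-q) = \pi(y)/\pi(x)$. From this one reads off that $\E_x \e^{-\lambda V_1} \leq 1 - c\lambda$ for small $\lambda$, with $c$ of the right order; the cleanest route is to bound $\E_x\e^{-\lambda V_1} \le \exp(-\E_x V_1 \cdot (\lambda - C\lambda^2 \E_x[V_1^2]/\E_x V_1))$ or simply to use the elementary inequality $\e^{-t}\le 1-t+t^2/2$ and then $1-s\le\e^{-s}$.

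The main obstacle is getting the constants to come out as stated, specifically the appearance of $d(x,y)=\E_x T(y)+\E_y T(x)$ in the exponent. This requires relating the second moment (or the geometric parameter) of $V_1$ to the commute time: one needs an estimate of the form $\E_x[V_1^2] \le (\text{const})\, d(x,y)\cdot \pi(y)/\pi(x)$, or equivalently a lower bound on $1-q$ in terms of $1/(\pi(x)d(x,y))$. This comes from the standard identity expressing hitting-time expectations through escape probabilities: $\E_x T(y)$ and $\E_y T(x)$ can both be written in terms of the excursion structure at $x$ and at $y$, and combining them produces the commute time. Once that bound is in hand, choosing $\lambda$ proportional to $\epsilon/(\pi(x)d(x,y))$ and feeding it into the i.i.d.\ Chernoff bound $\prob_x(\sum V_j \le (1-\epsilon)k\,\E_x V_1) \le \e^{k\lambda(1-\epsilon)\E_x V_1}(\E_x\e^{-\lambda V_1})^k$ yields the claimed $\exp[-\epsilon^2 k/(4\pi(x)d(x,y))]$ after routine simplification. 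I would keep a close eye on whether the factor $4$ (rather than $2$) is exactly what the elementary moment bounds give, and adjust the truncation of the exponential accordingly.
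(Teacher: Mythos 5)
Your route is the same as the paper's (and as KKLV's, to which the paper defers for the details): decompose $N_{T^k(x)}(y)$ into i.i.d.\ excursion counts $V_1,\dotsc,V_k$ via the strong Markov property, identify the law of $V_1$ through the escape probabilities $p_{xy}=\prob_x(T(y)< T^1(x))=1/(\pi(x)d(x,y))$ and $p_{yx}=1/(\pi(y)d(x,y))$, and run a lower-tail Chernoff bound. This works, and the constant $4$ does come out exactly.

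However, the quantitative target you set yourself is off in one spot, and as stated it would not deliver the lemma. You ask for $\E_x[V_1^2]\le C\, d(x,y)\pi(y)/\pi(x)$; plugging this into the optimized Chernoff exponent $\epsilon^2 k\,(\E_x V_1)^2/(2\E_x V_1^2)$, with $\E_x V_1=p_{xy}/p_{yx}=\pi(y)/\pi(x)$, yields only $\epsilon^2 k\,\pi(y)/(2C\pi(x)d(x,y))$, which is weaker than the claim by a factor $\pi(y)$ and hence fails when $\pi(y)$ is small. What you actually need, and what the geometric law gives directly, is $\E_x[V_1^2]\le 2p_{xy}/p_{yx}^2=2\pi(y)^2 d(x,y)/\pi(x)$, i.e.\ $\E_x[V_1^2]\le 2(\E_x V_1)^2\,\pi(x)d(x,y)$; correspondingly the optimal $\lambda$ is of order $\epsilon p_{yx}=\epsilon/(\pi(y)d(x,y))$, not $\epsilon/(\pi(x)d(x,y))$. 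With these corrections your scheme closes: $\E_x \e^{-\lambda V_1}\le 1-\lambda\E_x V_1+\tfrac{\lambda^2}{2}\E_x V_1^2\le\exp\bigl(-\lambda\E_x V_1+\tfrac{\lambda^2}{2}\E_x V_1^2\bigr)$, and the lower-tail Chernoff bound gives $\exp\bigl[-\epsilon^2 k\,(\E_x V_1)^2/(2\E_x V_1^2)\bigr]\le\exp\bigl[-\epsilon^2 k/(4\pi(x)d(x,y))\bigr]$, exactly as stated.
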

Let us sketch the argument. Because
of the strong Markov property, under $\prob_x$
the variables
\[
\bigl( N_{T^i (x)} (y) - N_{T^{i-1} (x)} (y) \bigr)_{i\geq 1}
\]
are independent and identically distributed. 
And it is a standard fact (see for instance~\cite[chapter 2]{AF}) that their law is geometric:
for every integer $r$ 
\[
\prob_x \bigl( N_{T^1(x)} (y) \geq r \bigr) = p_{xy} (1-p_{yx})^r ,
\]
where 
\[
p_{xy} = \prob_x (  T(y) \leq T^1 (x) ) = \frac1 {\pi (x) d(x,y)} . 
\]
The previous lemma is thus a Hoeffding type estimate for sums of independent geometric
variables. We refer to~\cite{KKLV} for the details. 
\\
Our next tool is taken from Barlow, Ding, Nachmias and Peres~\cite{BDNP}.
\begin{lem}
\label{BDNPlemma}
Let $A$ be a finite subset of $M$, let $z \in A$ and let $k$ be an integer. Then
\[
\begin{split}
\E_z T_{cov} (A) 
 & \leq \frac{ \E_z T^k (z)  }{Ê\prob_z \bigl( T_{cov} (A)  \leq T^k (z) \bigr) }  \\
&= \frac k   { \pi (z) Ê\prob_z \bigl( T_{cov} (A)  \leq  T^k (z) \bigr) }   
\end{split}
\]
\end{lem}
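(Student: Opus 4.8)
The plan is to prove Lemma~\ref{BDNPlemma} by a standard renewal/restart argument at the successive return times to $z$. The key observation is that the events ``$X$ covers $A$ between consecutive visits to $z$'' are, by the strong Markov property at the stopping times $T^j(z)$, independent and identically distributed, and each has probability $p := \prob_z(T_{cov}(A) \leq T^1(z))$ of success. Consequently $T_{cov}(A)$ is dominated by a geometric number of excursions: if $G$ is the first index $j \geq 1$ such that the chain has covered $A$ during the $j$-th excursion from $z$, then $G$ is geometric with success probability $p$ and $T_{cov}(A) \leq T^G(z)$. More precisely one should be slightly careful --- covering can also happen partly across several excursions, which only helps --- so the clean statement is $\prob_z(T_{cov}(A) > T^j(z)) \leq (1-p)^j$ for every $j$, which is what the geometric domination gives.

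First I would record the exact-value identity $\E_z T^k(z) = k/\pi(z)$. This is the familiar fact that the expected return time to $z$ equals $1/\pi(z)$ (Kac's formula), combined with the strong Markov property which makes $T^k(z)$ a sum of $k$ i.i.d.\ copies of $T^1(z)$; hence $\E_z T^k(z) = k\,\E_z T^1(z) = k/\pi(z)$. This justifies the second line of the displayed inequality once the first line is established.

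For the first line, I would fix the integer $k$ and set $q := \prob_z(T_{cov}(A) \leq T^k(z))$. Decompose the chain into the i.i.d.\ blocks $[T^{(i-1)k}(z), T^{ik}(z))$ for $i \geq 1$; by the strong Markov property at $T^{(i-1)k}(z)$, the event $E_i$ that $A$ is covered within this block has probability $q$ and these events are independent. Let $G = \inf\{ i \geq 1 : E_i \text{ occurs}\}$, a geometric variable with $\E_z G = 1/q$. On the event $\{G = i\}$ we have $T_{cov}(A) \leq T^{ik}(z)$, and $G$ is a stopping time for the block filtration, so by Wald's identity (the block lengths $T^{ik}(z) - T^{(i-1)k}(z)$ are i.i.d.\ with mean $\E_z T^k(z)$ and are independent of $\{G \geq i\}$),
\[
\E_z T_{cov}(A) \leq \E_z T^{Gk}(z) = \E_z G \cdot \E_z T^k(z) = \frac{\E_z T^k(z)}{q} = \frac{k}{\pi(z)\, q}.
\]

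The main obstacle, such as it is, is purely bookkeeping: one must make sure the event ``$A$ is covered within block $i$'' is correctly defined (it should be ``every point of $A$ is visited at some time in $[T^{(i-1)k}(z), T^{ik}(z))$'', which is genuinely i.i.d.\ across $i$ by the strong Markov property and the fact that the chain is at $z$ at each block endpoint) and that $T_{cov}(A) \leq T^{Gk}(z)$ on $\{G = i\}$ --- this holds because covering within an earlier block certainly implies covering by time $T^{ik}(z)$. Everything else is Kac's formula, the strong Markov property, and Wald's identity; no delicate estimates are needed, which is exactly why this lemma serves as a clean starting point for the chaining argument that follows.
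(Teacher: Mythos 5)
Your proof is correct and follows essentially the same route as the paper: regeneration at the return times $T^{ik}(z)$, a geometric bound on the number of blocks needed (the paper uses $N=\inf\{n\colon T_{cov}(A)\leq T^{nk}(z)\}$ and bounds its tail by independence of blocks, you use the slightly cruder but equivalent first block with within-block coverage), then Wald's identity and Kac's formula $\E_z T^k(z)=k/\pi(z)$. No meaningful difference beyond this bookkeeping choice.
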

Again we sketch the proof and refer to~\cite{BDNP}
for details. 
Let
\[ N = \inf ( n \geq 1 , \ T_{cov} (A) \leq T^{nk} (z) ) . \]
Then by Wald's identity
\[
\E_z T_{cov} (A) \leq  \E_z T^{Nk} (z)  = \E_z ( N ) \E_z T^k (z) .
\] 
On the other hand if $N$ is larger than $n$ then the walk fails 
to cover $A$ during any of the following intervals of time
\[ [0,T^k(z)),\ [T^k(z)  , T^{2k} (z)),\dotsc,\ [T^{(n-1)k } (z)  , T^{nk}(z)  ) \]
so that
\[
\prob_x ( N > n ) \leq \prob_z \bigl( T_{cov} (A)  \geq T^k (z)  \bigr)^n .
\]
The result follows. 
\\
The authors of~\cite{BDNP} combine these two lemmas 
with a nice chaining argument. 
Although it is not written 
this way, their result is essentially the Dudley 
version of Theorem~\ref{main-upper}:
\begin{equation}
\label{dudley}
\cov (M ) \leq L \bigl( \sum_{n=0}^\infty \e_n ( M , \sqrt{d}) 2^{n/2} \bigr)^2
\end{equation}
where 
\[
\e_n (M , \sqrt{d} ) = \inf_{A} \bigl( \sup_{x\in M} \sqrt{ d(x, A) } \bigr)
\]
where the infimum is taken over all subsets $A$ of $M$ 
satisfying $\abs{A} \leq N_n$. This is weaker than
Theorem~\ref{main-upper}. 
Indeed swapping the sup and the sum
in the definition of $\gamma_2$, it is easily seen that
\[
\gamma_2 ( T , \sqrt{d} )  \leq C \sum_{n=0}^\infty \e_n ( M , \sqrt{d} ) 2^{n/2} ,
\]
for some universal constant $C$. 
We show that it is possible to modify BDLP's chaining argument to
obtain Theorem~\ref{main-upper}.
\\
Let $z,x,y$ in $M$ such that $x\neq y$ and let $k,l$ be two integers larger than $1$. 
Observe that 
\[
\begin{split}
\prob_z ( T^l (y) > T^k(x) ) 
& = \prob_z ( N_{T^k (x) } (y) \leq l-1 ) \\
& \leq \prob_z   ( N_{T^k (x) } (y)  - N_{ T^1 (x)} (y) \leq l-1 ) \\
& = \prob_x  ( N_{T^{k-1} (x) } (y)  \leq l-1 ) . 
\end{split}
\]
The latest equality being a consequence of the strong Markov property. 
If $(l-1) / \pi (y) < (k-1) / \pi (x)$, applying Lemma~\ref{KKLV} to $k-1,l-1$ and
\[
\epsilon = 1 - \frac{ (l-1) \pi(x) }{ (k-1) \pi(y) } 
\]
gives
\begin{equation}
\label{majoration-hitting}
\prob_z  ( T^l (y) > T^k (x) ) \leq
 \exp\left[ - \frac{ \bigl( \frac{k-1}{\pi(x)} - \frac{l-1}{\pi(y)}  \bigr)^2 }{ 4 d(x,y) \frac{k-1}{\pi(x)} } \right] .
\end{equation}
This will be our key estimate. Lastly, we shall use the following elementary fact:
if $x$ and $y$ are distinct elements of $M$ then
\[
\frac 1 {\pi (x)}  = \E_x T^1 (x) \leq \E_x T(y) + \E_y  T(x) = d(x,y) . 
\]
Let us reformulate Theorem~\ref{main-upper}. 
\begin{prop}
Let $A \subset M$, let $z\in A$ and let $(\mathcal A_n)_{n\geq 0}$ 
be an admissible sequence of partitions of $A$. Then 
\[
\E_z ( T_{cov} (A) ) \leq L \Bigl( \sup_{x\in A} \sum_{n=0}^\infty 2^{n/2} \sqrt{\Delta (A_n(x))} \Bigr)^2 . 
\] 
Recall that $A_n (x)$ denotes the only element of $\mathcal A_n$ containing $x$. Also
$\Delta$ denotes the diameter with respect to the commute distance. 
\end{prop}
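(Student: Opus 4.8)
The plan is to run a chaining argument down the admissible sequence $(\mathcal A_n)$, using the hitting-time deviation estimate~\eqref{majoration-hitting} at each scale. Fix $z \in A$ and a chain $x \in A$; at level $n$ the partition $\mathcal A_n$ has at most $N_n = 2^{2^n}$ pieces, and I want the walk, run for a suitable number of returns to $z$, to have hit a ``representative'' point of $A_n(x)$ for every $x$, for every $n$. Concretely, pick in each $C \in \mathcal A_n$ a point (say the one minimizing something, or just any point $c_n(C)$), so that $c_0 = z$ on the trivial partition $\mathcal A_0 = \{A\}$. The diameters $\Delta_n := \Delta(A_n(x))$ satisfy $d(c_n(A_n(x)), c_{n+1}(A_{n+1}(x))) \le \Delta_n$ since both lie in $A_n(x)$. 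The key is to choose an increasing sequence of ``budgets'' $k_n$ of returns to $z$, geometrically growing like the $N_n$, and show: if the walk has made $k_n$ excursions from $z$ then with high probability (failure probability summable in $n$ and small) it has visited $c_n(A_n(x))$ for every $x \in A$.

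The heart is the single-step estimate. Suppose we know $c_n := c_n(A_n(x))$ has been visited by return time $T^{k_n}(z)$; we want $c_{n+1} := c_{n+1}(A_{n+1}(x))$ visited by $T^{k_{n+1}}(z)$. Apply~\eqref{majoration-hitting} (with the roles: the walk started from $z$, target $y = c_{n+1}$, reference $x = c_n$, exponents $k = k_{n+1}$ and $l = $ a small number of visits to $c_{n+1}$, say $l=1$). The exponent there is $\bigl(\tfrac{k_{n+1}-1}{\pi(c_n)} - \tfrac{l-1}{\pi(c_{n+1})}\bigr)^2 \big/ \bigl(4 d(c_n, c_{n+1}) \tfrac{k_{n+1}-1}{\pi(c_n)}\bigr)$, which with $l=1$ collapses to roughly $(k_{n+1}-1) \big/ (4 \pi(c_n) d(c_n,c_{n+1})) \ge (k_{n+1}-1)/(4\pi(c_n)\Delta_n)$. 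So the failure probability at level $n+1$, for a fixed $x$, is at most $\exp[-(k_{n+1}-1)/(4\pi(c_n)\Delta_n)]$; using $\pi(c_n)^{-1} \le d(c_n, c') $ is a bit delicate because we need to relate $\pi(c_n)$ to the diameters — but the elementary fact $1/\pi(x) \le d(x,y)$ for any $y \ne x$ lets us bound $\pi(c_n)^{-1}$ below by $\Delta_n$ when $A_n(x)$ is non-degenerate, so the exponent is $\gtrsim (k_{n+1}-1)/(4\Delta_n^2)$ — no wait, that goes the wrong way. The right move is rather: choose $k_{n+1}$ of the form $c \cdot 2^{n+1} \pi(c_n) \Delta_n$ rounded up, scale by scale, so that each exponent is $\gtrsim 2^{n+1}$, making a union bound over the $\le N_{n+1} = 2^{2^{n+1}}$ choices of piece succeed. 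Actually the cleanest is to set the budget after $n$ levels to be (up to constants) $\pi(z) \bigl(\sum_{m<n} 2^{m} \Delta_m \sqrt{\text{stuff}}\bigr)$ — this is where the $\gamma_2$-sum, squared, will appear.

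So the steps, in order: (1) set up the representatives $c_n(C)$ with $c_0 = z$; (2) define the budget sequence $k_n$ recursively so that $k_{n+1} - k_n$ is (up to a universal constant) $\pi(z)\, 2^{(n+1)/2}\sqrt{\Delta_n}\cdot\bigl(\sup_{x}\sum 2^{m/2}\sqrt{\Delta_m}\bigr)$ — chosen precisely so that, dividing the exponent in~\eqref{majoration-hitting} (with $x = c_n$, $y = c_{n+1}$, $k = k_{n+1}$, and $l$ being roughly the inductively-guaranteed count of visits to $c_n$) one gets a bound $\ge 2^{n+1}\log 2$ uniformly; (3) union bound over $n$ and over the $\le N_{n+1}$ pieces at level $n+1$ to get that with probability $\ge 1/2$, say, the walk by time $T^{k_\infty}(z)$ (with $k_\infty := \lim k_n = $ a universal constant times $\pi(z)(\sup_x\sum 2^{m/2}\sqrt{\Delta_m})^2$) has covered all of $A$ — here I use that a point and a chain representative agree: every $v \in A$ equals $c_n(A_n(v))$ for $n$ large, since eventually partitions separate points, OR, more carefully, truncate the partition to be pointwise-constant past level $k$ with $N_{k-1} < |A| \le N_k$ and note the tail of the sum controls the last gap; (4) feed this into Lemma~\ref{BDNPlemma}: with $\prob_z(T_{cov}(A) \le T^{k_\infty}(z)) \ge 1/2$ we get $\E_z T_{cov}(A) \le 2 k_\infty/\pi(z) \le L (\sup_x \sum 2^{n/2}\sqrt{\Delta_n})^2$, which is exactly the claimed bound.

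The main obstacle I anticipate is bookkeeping the two coupled exponents in~\eqref{majoration-hitting}: the estimate requires both $k$ and $l$ (a lower count of visits to the reference point $c_n$, which is itself only guaranteed with high probability from the previous level) and the constraint $(l-1)/\pi(y) < (k-1)/\pi(x)$; getting the recursion for $k_n$ to close — so that the guaranteed visit-count to $c_n$ is large enough relative to $k_{n+1}$ to make the Gaussian-type exponent beat $2^{n+1}$ — while keeping $k_\infty$ bounded by the squared $\gamma_2$-sum, is the delicate part. One must also handle the degenerate pieces ($\Delta_n = 0$, i.e. $c_{n+1} = c_n$, for which no estimate is needed) and the dependence on the starting point $z$ versus an arbitrary $z \in A$ (but Lemma~\ref{BDNPlemma} is stated for a fixed $z \in A$, so this is fine; one then takes a sup over $z$, noting the $\gamma_2$-quantity does not depend on $z$). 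A secondary subtlety is that $\pi(c_n)$ appears in the budget, so one wants to eliminate it using $1/\pi(c_n) \le \Delta_{n-1}$ (valid as long as $A_{n-1}(x)$ has at least two points) to get a clean bound in terms of diameters only; care is needed at $n=0$ where $1/\pi(z) \le \Delta_0 = \Delta(A)$.
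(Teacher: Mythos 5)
Your overall architecture is the right one (representatives in each piece, the key estimate \eqref{majoration-hitting}, a union bound over scales, and Lemma~\ref{BDNPlemma} applied to the top-level count of returns to $z$), and your final accounting $\E_z T_{cov}(A)\leq 2k_\infty/\pi(z)$ with $k_\infty\approx \pi(z)\bigl(\sup_x\sum 2^{n/2}\sqrt{\Delta_n}\bigr)^2$ matches the paper's endgame. But the heart of the argument --- how the recursion between consecutive scales closes --- is exactly the part you leave unresolved, and the specific choices you sketch would not work. You propose an \emph{increasing} sequence of budgets counting returns to $z$, and in the one-step estimate you take the target count $l=1$. With $l=1$ the event you control only guarantees a single visit to $c_{n+1}$, so at the next scale you have no large reference count at $c_{n+1}$ to feed into \eqref{majoration-hitting}; as you yourself note, the constraint $(l-1)/\pi(y)<(k-1)/\pi(x)$ couples the guaranteed count at $c_n$ to the required count at $c_{n+1}$, and with your choices the recursion does not close. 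Moreover, counting returns to $z$ cannot replace counting visits to the chain points: the distance entering the exponent of \eqref{majoration-hitting} is the distance between the two points whose visit counts are compared, so to profit from the small diameters $\Delta(A_n(x))$ the counts must be maintained at $x_n$ and $x_{n+1}$, not at $z$.

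The missing idea is the correct definition of the per-point visit requirements. The paper sets $r_n(x)=\sup_{y\in A_n(x)}\sum_{k\geq n}2^{k/2}\sqrt{\Delta(A_k(y))}$ (the \emph{tail} of the chaining sum, taken as a supremum over the piece so that it depends only on $A_n(x)$) and requires $k_n(x)=\lfloor 34\,\pi(x_n)\,r_n(x)\,r_0(x)\rfloor+1$ visits to $x_n$. The normalized budgets $k_n(x)/\pi(x_n)\approx 34\,r_n(x)r_0(x)$ are \emph{decreasing} in $n$, they drop by at least $33\cdot 2^{n/2}\sqrt{d(x_n,x_{n+1})}\,r_0(x)$ at each step (here the elementary bound $1/\pi(x_n)\leq\sqrt{d(x_n,x_{n+1})}\,r_0(x)$ absorbs the rounding and the $\pi$'s, as you anticipated), while the top budget stays $\leq 34\,r_0(x)^2$ in normalized form; plugging these two facts into \eqref{majoration-hitting} gives failure probability $\e^{-2^{n+3}}\leq 1/N_{n+3}$ per pair, the events $\{T^{k_{n+1}(x)}(x_{n+1})\leq T^{k_n(x)}(x_n)\}$ nest telescopically down to $T^{k_0}(z)$ (since $x_0=z$ and $\mathcal A_0=\{A\}$), and only $k_0\approx 34\,\pi(z)r_0^2$ ever enters Lemma~\ref{BDNPlemma} --- no summation of budget increments with a spurious $\pi(z)$ factor is needed. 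Without this choice of $k_n(x)$ (or an equivalent one), the chaining you describe does not go through; with it, the rest of your outline (union bound over the at most $N_nN_{n+1}\leq N_{n+2}$ pairs, treatment of degenerate pieces $x_n=x_{n+1}$, truncation so that $x_n=x$ eventually, and the final $1/\pi(z)\leq\Delta(A)\leq r_0^2$) is essentially the paper's proof.
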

\begin{proof}
Let $t_0 ( A  ) = z$, and for each $n$ and for each $ B \in \mathcal A_n $ let $t_n (B)$ be an arbitrary
element of $B$. Given $x \in A$, we let $x_n = t_n ( A_n(x) )$. We can assume that $A$ is finite and 
that 
\[
\mathcal A_n = \{ \{ x \} , \ x \in A\}
\]
for $n$ large enough (the right hand of the desired inequality equals $+\infty$ otherwise). 
Therefore $x_n = x$ eventually. 
Let 
\[
r_n (x) =  \sup_{y \in A_n (x)}  \sum_{k=n}^{+\infty} 2^{k/2}  \sqrt{ \Delta ( A_k (y) ) } 
\]
and 
\[
k_n (x) = \lfloor 34 \cdot \pi (x_n) r_n(x) r_0 (x) \rfloor + 1 ,
\]
where $\lfloor r \rfloor$ denotes the integer part of $r$. 
Observe that $r_n(x)$ and $k_n (x)$ depend only on $A_n(x)$. In particular
$k_0 (x)$ depends on nothing. Also
\[
\begin{split}
r_n (x) - r_{n+1} (x) 
& \geq 2^{n/2} \sqrt{ \Delta ( A_n (x) ) } \\
& \geq 2^{n/2} \sqrt{ d(x_n , x_{n+1} ) }.
\end{split}
\]
We claim that for every $x$ and $n$
\begin{equation}
\label{step-upper}
\prob_z \bigl(  T^{k_{n+1} ( x )} ( x_{n+1} )  > T^{ k_n ( x ) } (x_n) \bigr)
\leq \e^{-2^{n+3}} \leq \frac 1 { N_{n+3} } .
\end{equation}
Indeed, if $x_n = x_{n+1}$ then $k_{n+1} ( x ) \leq k_n ( x )$
and the inequality is trivial. Otherwise write
\[
\begin{split}
\frac{ k_n ( x)  - 1} { \pi (x_n) } - \frac{ k_{n+1} (x) - 1} { \pi (x_{n+1}) } 
& \geq 34 \cdot ( r_n(x) - r_{n+1} (x) ) r_0 (x) - \frac 1 {\pi (x_n)}  \\
& \geq 34 \cdot 2^{n/2} \sqrt{d (x_n , x_{n+1} )} r_0 (x) - \frac 1 {\pi (x_n)} . 
\end{split}
\]
Since $x_n \neq x_{n+1}$ and $\sqrt{d(x_n , x_{n+1})}\leq r_0 (x)$ we have
\[
\frac 1 {\pi (x_n)} \leq \sqrt{d (x_n , x_{n+1} )}  r_0 (x)  . 
\]
Therefore 
\[
\begin{split}
\frac{ k_n ( x) - 1  } { \pi (x_n) } - \frac{ k_{n+1} (x) - 1} { \pi (x_{n+1} )} 
& \geq (  34 \cdot 2^{n/2} - 1 ) \sqrt{d (x_n , x_{n+1} )} r_0 (x) \\
& \geq 33 \cdot 2^{n/2}  \sqrt{d (x_n , x_{n+1} )} r_0 (x) . 
\end{split}
\]
Also
\[
\frac{ k_n ( x) - 1  } { \pi (x_n) } \leq 34 \cdot r_n (x) r_0 (x) \leq 34 \cdot  r_0 (x)^2 . 
\] 
Since $33^2 / (4 \cdot 34 ) \geq 2^3$, combining~\eqref{majoration-hitting}
with the last two inequalities yields~\eqref{step-upper}.
\\
The number  of possible couples $(x_n,x_{n-1})$ is at most $N_n N_{n+1}$.
Recall the definition~\eqref{definitionNn} of $N_n$ and observe that $N_n^2 \leq N_{n+1}$
for all $n$. A union bound shows that the probability that there exists $x$ and $n$ such that
\[
 T^{k_{n+1} ( x )} (x_{n+1})  \geq T^{k_n (x)} (x_n)
\]
is at most
\[
\sum_{n\geq 0} \frac{ N_n N_{n+1} }{ N_{n+3} } \leq \sum_{n\geq 0} \frac 1{N_{n+2}} \leq \sum_{n\geq 4} 2^{-n} 
= \frac 1 8. 
\]
Therefore with probability at least $7/8$, we have
\[ 
T^{k_{n+1} ( x )} (x_{n+1})  \leq T^{k_n ( x )} (x_n)
\]
for all $x$ and $n$, hence
\[
T^{k_n ( x)} (x_n) \leq T^{ k_0 ( x )} (x_0) = T^{k_0 } (z) .
\]
Since $x_n = x$ for $n$ large enough and $k_n (x) \geq 1$ we obtain
\[
\forall x\in A, \ T (x) \leq T^{k_0} (z) 
\]
with probability $7/8$ at least. In other words
\[
\prob_z ( T_{cov} (A)  \leq T^{ k_0} (z) ) \geq \frac 7 8 .
\]
Together with Lemma~\ref{BDNPlemma} we get
\[
\E_z  T_{cov} (A) 
\leq  \frac { 8 k_0 }{ 7 \pi ( z ) }
\leq \frac 8 7 ( 34 \cdot r_0^2 + \frac 1 {\pi (z)} ) .
\]
Unless $A = \{ z \}$, in which case $\cov ( A ) = 0$
and there is nothing to prove, we have $1/ \pi ( z ) \leq \Delta ( A ) \leq r_0^2$. 
Therefore 
\[
\E_z T_{cov} ( A ) \leq \frac{ 8 \cdot 35} 7  \bigl( \sup_{x\in A} \sum_{n=0}^\infty 2^{n/2} \sqrt{\Delta ( A_n (x) )} \Bigr)^2 . 
\qedhere
\]
\end{proof}
%
%
\section{The lower bound}
First let us slightly modify the definition 
of cover time: given $A \subset M$ let
\[
\begin{split}
\cov_- ( A ) & = \min_{x\in A} \E_x  T_{cov} (A) \\
\cov_+ ( A ) & = \max_{x\in A} \E_x  T_{cov} (A) 
\end{split}
\]
In this section we prove the following
\begin{prop}
Let $X$ be an irreducible, positive recurrent 
Markov chain on a discrete state space $M$. 
If the chain is reversible then for every finite
subset $A$ of $M$
\[
\gamma_1 ( A , d ) \leq L ( \cov_- ( A ) + \Delta ( A , d ) ) ,
\]
where $L$ is a universal constant. 
\end{prop}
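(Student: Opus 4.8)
The plan is to run Talagrand's growth-functional scheme for the generic chaining (see \cite{TALA}), with the cover time playing the role of the functional. Put, for $H\subseteq A$,
\[
F(H)=\cov_-(H)+\Delta(H,d),
\]
a quantity that is non-decreasing in $H$. In order to produce an admissible sequence of partitions $(\mathcal A_n)_{n\ge0}$ of $A$ witnessing $\sup_{x\in A}\sum_{n\ge0}2^n\Delta(A_n(x))\le L\,F(A)$ --- which is exactly the assertion $\gamma_1(A,d)\le L(\cov_-(A)+\Delta(A,d))$ --- it is enough, by Talagrand's partitioning lemma in the $\gamma_1$ regime, to check that $F$ satisfies a growth condition of the following type: there are universal constants $r\ge4$ and $\kappa>0$ such that whenever $n\ge0$, $a>0$, $H\subseteq A$, and $H_1,\dots,H_m$ are subsets of $H$ with $m\ge N_{n+1}$, each of $d$-diameter at most $a/r$ and admitting representatives $t_\ell\in H_\ell$ with $d(t_\ell,t_{\ell'})\ge a$ for all $\ell\ne\ell'$, one has
\[
\cov_-(H)\ \ge\ \kappa\,2^n a\ +\ \min_{1\le\ell\le m}\cov_-(H_\ell).
\]
The additive $\Delta(A,d)$ in $F$ is there only to absorb the top scale $n=0$ and the degenerate case $A=\{z\}$.

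The growth condition is reduced to a single \emph{separation-cost lemma}: for a reversible chain there is a universal $c_0>0$ such that if $S\subseteq M$ is covered by $m$ nonempty pieces of $d$-diameter at most $a/r$ whose representatives are pairwise at commute distance at least $a$, and $\sigma_S$ denotes the first time the walk has visited at least one point of each piece, then $\E_z\sigma_S\ge c_0(\log m)\,a$ for every starting point $z$. Granting this, fix $z\in H$ and let $\sigma$ be the first time the walk has entered each of $H_1,\dots,H_m$; equivalently $\sigma$ is the first entrance time into the piece $H_\star$ entered last, so that $H_\star$ is $\mathcal F_\sigma$-measurable and $X_\sigma\in H_\star$. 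Since covering $H$ forces the walk to enter every $H_\ell$ we have $\sigma\le T_{cov}(H)$, and the separation-cost lemma applied with $S=\bigcup_\ell H_\ell$ gives $\E_z\sigma\ge c_0(\log m)\,a$. Moreover, after time $\sigma$ the walk must still cover $H_\star$ starting from $X_\sigma\in H_\star$, so the strong Markov property yields $\E_z[T_{cov}(H)-\sigma]\ge\E_{X_\sigma}T_{cov}(H_\star)\ge\cov_-(H_\star)\ge\min_\ell\cov_-(H_\ell)$. Since $T_{cov}(H)=\sigma+(T_{cov}(H)-\sigma)$, adding the two bounds and using $\log m\ge\log N_{n+1}=2^{n+1}\log2\ge2^n$ gives $\E_z T_{cov}(H)\ge c_0\,2^n a+\min_\ell\cov_-(H_\ell)$; taking the infimum over $z\in H$ is the growth condition with $\kappa=c_0$.

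The heart of the matter --- and the main obstacle --- is thus the separation-cost lemma, which is precisely where reversibility must be used: the Remark following Theorem~\ref{main-lower} exhibits a non-reversible chain with $m$ states pairwise at commute distance $a$ whose cover time is only $a$, with no logarithmic gain, so a hypothesis beyond the metric is unavoidable. Matthews' lower bound in \eqref{matthews-eq} is too crude here, since $\E_x T(y)$ can be far smaller than the commute distance $d(x,y)$ even for reversible chains (the numbers $\E_x T(y)$ and $\E_y T(x)$ need not be comparable), so $\min_{x\ne y}\E_x T(y)$ over the separated points may be much smaller than $a$. I expect the right input to be a second-moment, or anti-concentration, estimate for hitting times valid under reversibility --- roughly, that $\prob_z(T(t_\ell)\ge c\,a)$ is bounded below for a definite fraction of the indices $\ell$, together with enough asymptotic independence between these events to make $\max_\ell T(t_\ell)$ of order $a\log m$ with non-vanishing probability --- fed into a coupon-collector variant of Matthews' argument; the deviation estimate of Lemma~\ref{KKLV} and the identity $\prob_x(T(y)\le T^1(x))=1/(\pi(x)d(x,y))$ should be the relevant elementary tools. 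Once the separation-cost lemma is in hand, the remaining ingredients --- the dichotomy in Talagrand's scheme according to whether an $(a/r)$-net of a given piece has at most $N_{n+1}$ elements, and the verification that the partitions produced are admissible --- are routine.
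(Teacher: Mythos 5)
Your reduction of the statement to a growth condition for $\cov_-$, and the strong Markov argument that adds $\min_\ell \cov_-(H_\ell)$ to the separation cost, are exactly the paper's scheme (this is Proposition~\ref{matthews3} combined with Theorem~\ref{growth-thm}). But the proposal has a genuine gap precisely at the point you yourself flag as the heart of the matter: the ``separation-cost lemma'' is only conjectured, and the route you suggest for it --- a second-moment or anti-concentration estimate for hitting times under reversibility, plus some asymptotic independence to run a coupon-collector argument --- is not substantiated and is not what makes the problem go through. Without that lemma the growth condition is unproved, so the proof is incomplete exactly where reversibility must enter.

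The paper's resolution is deterministic and combinatorial, not probabilistic. Reversibility gives the cycle identity
\[
\E_{x_1} T(x_2) + \dotsb + \E_{x_m} T(x_1) = \E_{x_1} T(x_m) + \dotsb + \E_{x_2} T(x_1),
\]
and one applies it to the directed graph on the representatives $t_1,\dotsc,t_m$ whose edges are the ``cheap'' transitions $\E_x T(y) \leq a/4$. Along any path of this graph, each edge costs at most $a/4$ forward but, since commute distances are at least $a$, at least $3a/4$ backward; the cycle identity then forces the path length to be bounded by a constant once the diameter of the whole configuration is at most $ra$ (this is where the hypothesis $\Delta(\cup_i H_i)\leq ra$, which your formulation drops, is needed). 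Hence there is an independent subset $I$ with $\abs{I}\geq m/33$ on which \emph{one-way} hitting times satisfy $\E_{t_i} T(t_j)\geq a/4$, and after subtracting the small diameters of the pieces, $\E_x T(y)\geq a/8$ for $x\in H_i$, $y\in H_j$, $i\neq j$ in $I$. Matthews' argument (your own last step) then applies to the pieces indexed by $I$, and since $m=N_{n+5}$ implies $\abs{I}\geq N_{n+4}\geq \e^{8\cdot 2^n}$, the loss of the factor $33$ is harmless. So the missing ingredient is Corollary~\ref{rev-cor} (taken from Kahn--Kim--Lov\'asz--Vu), not a deviation or independence estimate for hitting times; your cited tools (Lemma~\ref{KKLV} and the identity $\prob_x(T(y)\leq T^1(x))=1/(\pi(x)d(x,y))$) belong to the upper bound and play no role here.
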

\begin{rems}
\begin{enumerate}
\item
This yields Theorem~\ref{main-lower} since clearly
\[
\begin{split}
\cov_- ( A ) & \leq \cov_+ ( A ) \\
\Delta ( A , d ) & \leq \cov_+ ( A ).
\end{split}
\]  
\item
The term $\Delta ( A,d )$ cannot be removed from the inequality.
Indeed if $M = \{0,1\}$ and 
the transitions are given by the matrix
\[
\begin{pmatrix}
\epsilon & 1-\epsilon\\
\epsilon & 1 - \epsilon\\
\end{pmatrix}
\]
then 
\[
\gamma_1 ( M, d) \geq \Delta ( M ,d) =\frac 1 { \epsilon ( 1- \epsilon ) }
\]
whereas $\cov_- (M ) = \min ( \frac1 \epsilon  , \frac 1 {1-\epsilon} )$. 
\end{enumerate}
\end{rems}
\subsection*{Talagrand's growth condition}
Recall the majoring measure theorem: if
$(T(x))_{x \in M}$ is a centered Gaussian process
then 
\[ \gamma_2 ( T, d ) \leq L \E \sup_{x\in M} T(x) , \]
where $d$ is the $L^2$ distance~\eqref{L2distance}.
The proof of Talagrand consists in showing
(using Sudakov's inequality) that the functional
\[
A \mapsto  \E \sup_{x\in A} T(x)
\]
satisfies an abstract growth condition, 
and that such functionals dominate $\gamma_2$.
Here is the definition of the growth condition 
adapted to the $\gamma_1$ situation (rather than $\gamma_2$). 
\begin{defi}[Growth condition]
\label{growth-def}
Let $(M,d)$ be a metric space. 
A functional $F \colon \mathcal P (M) \to \R_+$ is said to satisfy the growth condition with parameters $r >1$ and $\tau \in \mathbb N$ if for every step $n \in \mathbb N$ and every scale $a>0$ the followings holds. Let $m = N_{n+\tau}$,
for every sequence $H_1 , \dotsc , H_m$ of non-empty subsets of $M$ satisfying
\begin{enumerate}
\item $\Delta ( \cup_{i\leq m}  H_i ) \leq r a$,
\item $ d ( H_i , H_j ) \geq a$ for all $i\neq j$,
\item $\Delta ( H_i ) \leq a/r$ for all $i$,
\end{enumerate}
we have 
\[
F ( \cup_{i\leq m} H_i ) \geq a 2^{n} + \min_{i\leq m} F( H_i ) .
\]
\end{defi}
\begin{thm}
\label{growth-thm}
If $F$ is a non-decreasing for the inclusion and satisfies the growth condition with parameters $r$ and $\tau$ then 
\[
\gamma_1 ( M , d ) \leq L 2^{\tau} ( \Delta ( M , d) + r F( M ) ) ,
\]
where $L$ is a universal constant. 
\end{thm}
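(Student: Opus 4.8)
The plan is to follow Talagrand's standard argument for building an admissible sequence of partitions from a functional satisfying a growth condition, but adapted to the $\gamma_1$ (rather than $\gamma_2$) scaling, where each level-$n$ diameter contributes weight $2^n$ instead of $2^{n/2}$. First I would set up the recursive partitioning scheme: starting from $M$ itself, I would construct partitions $\mathcal A_n$ together with, for each piece $B\in\mathcal A_n$, a ``remaining budget'' controlled by $F(B)$, so that the pieces at level $n$ have diameter roughly $r^{-j}\Delta(M)$ for an appropriate index $j$ tied to the scale. The key bookkeeping device is to associate to each piece $B$ an integer $j(B)$ and declare $B$ to have \emph{size} comparable to $r^{-j(B)}\Delta(M)$; the growth condition is invoked precisely to guarantee that when we split $B$ into many children of size one step smaller, at most $N_{n+\tau}-1$ of them can be ``large'' in the sense of carrying nearly the full value $F(B)$, while the rest lose a definite amount $a 2^n$ of $F$-value. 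This is what forces the total number of pieces to stay below the admissibility bound $N_n$.

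Concretely, the second step is the splitting lemma: given a piece $B$ of current size $a$ (so $\Delta(B)\le a$, say), partition $B$ into balls of radius $a/r$ with centers that are $a/r$-separated; group these so as to extract a ``well-separated'' family $H_1,\dots,H_m$ with $m=N_{n+\tau}$ satisfying conditions (1)--(3) of Definition~\ref{growth-def} (diameters $\le a/r$, mutual distances $\ge a/r$ after rescaling, everything inside a set of diameter $\le r\cdot(a/r)$). If no such family of size $m$ exists, then $B$ is covered by fewer than $m$ balls of radius $a/r$ and we recurse on each with the size decreased by a factor $r$ \emph{without} spending any $F$-budget and without changing level. If such a family does exist, the growth condition tells us $F(B)\ge (a/r)2^n + \min_i F(H_i)$, so we may peel off one of the $H_i$ as a child whose $F$-value dropped by $(a/r)2^n$, advancing the level by one. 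Iterating, one shows the number of children of a level-$n$ piece that reach level $n+1$ is at most $N_{n+\tau}$ per splitting step, and with the $N_n^2\le N_{n+1}$ type inequalities this keeps $\abs{\mathcal A_n}\le N_n$ (absorbing the $2^\tau$ loss into the final constant).

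The third step is to sum up the chaining series. Along any chain $M=A_0(x)\supset A_1(x)\supset\cdots$, each time we advance a level we pay at most a diameter $\le a/r$ times the weight $2^{n+1}$, i.e. at most $2r^{-1}\cdot F\text{-decrement}\cdot 2^n/2^n$ — the point being that the decrement $(a/r)2^n$ at level $n$ is exactly what multiplies against $2^n$ in $\sum 2^n\Delta(A_n(x))$, so the telescoping sum is bounded by a constant times $r(F(M)+\Delta(M))$, the extra $\Delta(M)$ accounting for the finitely many ``free'' recursion steps at the top scale before the first $F$-decrement. Careful: one must also handle the steps where we recurse without advancing the level (the geometric series in $r^{-j}$ converges, contributing another $O(\Delta(M))$), and one should verify the series $\sum 2^n r^{-j(n,x)}$ is controlled because $j$ is non-decreasing along the chain and increments of $j$ correspond to genuine $F$-loss.

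I expect the main obstacle to be the combinatorial accounting in step two: making the ``at most $N_{n+\tau}$ large children per splitting step'' argument precise while simultaneously tracking two indices — the partition level $n$ (which controls the admissibility cardinality $N_n$) and the geometric size index $j$ (which controls the actual diameters) — and ensuring the two stay in the right relationship so that the final sum has the clean form $2^n\Delta(A_n(x))$ rather than something off by powers of $r$. The $\gamma_1$ scaling makes this more delicate than the $\gamma_2$ case because there is no square root to absorb slack, so the constants in the separation/covering step (the choice of $r$ vs.\ $a/r$ in conditions (1)--(3)) must be matched exactly to the growth condition's hypotheses rather than merely up to a universal factor.
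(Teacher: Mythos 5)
The paper does not actually prove Theorem~\ref{growth-thm}: it invokes it as a known result and refers to Talagrand's book~\cite{TALA}, where it is established by the recursive partitioning scheme you outline. So your plan is aimed at the same argument the paper delegates to the literature; the issue is that what you have written is a plan and not a proof. The entire mathematical content of the theorem is the decomposition lemma you describe in your second step together with the two-index bookkeeping (partition level $n$ versus geometric scale $j$) and the verification that the resulting sequence is admissible and that $\sum_n 2^n\Delta(A_n(x))$ telescopes against the $F$-decrements. You explicitly defer this ("I expect the main obstacle to be the combinatorial accounting in step two"), so the core of the proof is acknowledged but not carried out; as a blind proof attempt this is a genuine gap rather than a complete alternative route.

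Two concrete points where the sketch, as written, would not go through. First, the family you extract inside a piece of size $a$ has diameters $\leq a/r$, mutual separation $\geq a/r$, and union of diameter $\leq a$; these do not match the hypotheses of Definition~\ref{growth-def} at any scale. To apply the growth condition at scale $a'=a/r$ you need $\Delta(H_i)\leq a/r^2$, $d(H_i,H_j)\geq a/r$ and $\Delta(\cup_i H_i)\leq a$, so the pieces must be taken one scale smaller than their separation; this is exactly the point where Talagrand selects centers maximizing $F$ of balls of radius $r^{-j-2}$ while separating them at distance $r^{-j-1}$, and without that extra factor of $r$ the ``peel off one $H_i$ with $F$ dropped by $a'2^n$'' step has nothing to apply to. Second, the cardinality accounting is not right as stated: splitting each piece of $\mathcal A_n$ into at most $N_{n+\tau}$ children gives $\abs{\mathcal A_{n+1}}\leq N_n N_{n+\tau}\leq N_{n+\tau+1}$, so the constructed sequence is only admissible after shifting indices by roughly $\tau+1$; this shift multiplies every weight $2^n$ by about $2^{\tau}$, and that is precisely where the explicit factor $2^{\tau}$ in the statement comes from --- it cannot be ``absorbed into the final constant,'' since $\tau$ is a parameter, not a universal quantity.
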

We refer to~\cite{TALA} for a proof of this theorem. 
The purpose of the rest of this section is
to show that the functional 
\[
A \mapsto \cov_- (A) 
\]
is non-decreasing and satisfies the growth condition
on $(M,d)$ (where $d$ is the commute distance)
with universal parameters $\tau$ and $r$. 
\begin{lem}
The functional $A \mapsto \cov_- (A)$ is non-decreasing for the inclusion. 
\end{lem}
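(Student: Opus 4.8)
The plan is to unwind the definitions and reduce the claim to a single use of the strong Markov property. Let $A \subseteq B$ be finite subsets of $M$; I want $\cov_- ( A ) \leq \cov_- ( B )$. The first observation is purely deterministic: covering a bigger set cannot be faster, so from $A \subseteq B$ we get $T_{cov} ( A ) = \sup_{y \in A} T ( y ) \leq \sup_{y \in B} T ( y ) = T_{cov} ( B )$ pointwise, hence $\E_x T_{cov} ( A ) \leq \E_x T_{cov} ( B )$ for every $x \in M$. The only real point is that $\cov_-(A)$ and $\cov_-(B)$ minimise over different sets of starting states, so this does not conclude on its own: I still have to compare $\E_z T_{cov}(A)$, where $z$ is a minimiser for $B$ and need not belong to $A$, with $\min_{y\in A}\E_y T_{cov}(A)$.

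To that end, choose $z \in B$ with $\cov_- ( B ) = \E_z T_{cov} ( B )$ (the minimum is attained as $B$ is finite), and start the chain at $z$. Let $\tau = T(A)$ be the first time it meets $A$, which is finite $\prob_z$-almost surely by recurrence, and set $Y = X_\tau \in A$. Since no point of $A$ is visited before $\tau$, we have $T_{cov}(A) = \tau + S$, where $S$ is the additional time needed, after $\tau$, to visit every point of $A$; and by the strong Markov property at $\tau$, conditionally on the past up to $\tau$ the variable $S$ has the law of $T_{cov}(A)$ under $\prob_Y$. Therefore
\[
\E_z T_{cov} ( A ) \;=\; \E_z \tau + \E_z \bigl[ \E_Y T_{cov} ( A ) \bigr] \;\geq\; \E_z \bigl[ \E_Y T_{cov} ( A ) \bigr] \;\geq\; \min_{y \in A} \E_y T_{cov} ( A ) \;=\; \cov_- ( A ) .
\]
Combining with the deterministic bound $\E_z T_{cov} ( A ) \leq \E_z T_{cov} ( B ) = \cov_- ( B )$ gives $\cov_- ( A ) \leq \cov_- ( B )$, which is the assertion.

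I do not expect any serious obstacle here. The two points deserving a line of justification are the finiteness of the expectations in play — guaranteed by the finiteness of $A$ and $B$ together with positive recurrence — and the splitting $T_{cov}(A) = \tau + S$, which is simply the remark that the first point of $A$ reached is reached exactly at time $\tau$, while each of the other points of $A$ is reached strictly later, so they are precisely the points the shifted chain still has to visit. The rest is routine bookkeeping.
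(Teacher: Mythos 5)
Your proof is correct and is essentially the paper's argument: both rest on the strong Markov property applied at the first hitting time $T(A)$, giving $\E_z T_{cov}(A) \geq \E_z\bigl[\E_{X_{T(A)}} T_{cov}(A)\bigr] \geq \cov_-(A)$, combined with the pointwise monotonicity $T_{cov}(A) \leq T_{cov}(B)$ (the paper merely fuses these two steps into the single inequality $T_{cov}(B) \geq T(A) + T_{cov}(A)\circ\sigma_{T(A)}$ and argues for an arbitrary starting point of $B$ instead of a minimiser). No gap; the cosmetic differences do not change the argument.
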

\begin{proof}
We use the strong Markov property. 
The shift operator is denoted by $\sigma$:
for every integer $k$
\[
\sigma_k ( X_0 ,X_1 , \dotsc ) = (X_k , X_{k+1}, \dotsc ).
\]
Let $A\subset B$ and let $x\in B$. Then
\[
T_{cov} (B) \geq T(A) + T_{cov} (A) \circ \sigma_{T(A)} .
\]
In words: at time $T(A)$ the chain has yet to visit 
every point of $A\backslash \{X_{T(A)}\}$. 
By the strong Markov property
\[
\begin{split}
\E_x  T_{cov} (B) 
& \geq \E_x T (A) + \E_x \bigl[  \E_{X_T(A)}  T_{cov} (A)  \bigr] . \\
& \geq \E_x T(A)  + \cov_- (A) \\
& \geq \cov_- (A) , 
\end{split}
\]
which is the result. 
\end{proof}

\subsection*{Variations on Matthews' bound}
The following is due to Matthews~\cite{MAT}.
\begin{lem}
\label{matthews1}
Let $A$ be a finite subset of $M$, let $a>0$ and 
assume that $\E_{x} T (y) \geq a$ for every $x\neq y$ in $A$. 
Then
\[ 
 \cov_- ( A ) \geq  
 a \sum_{k=1}^{ \abs{ A} -1} \frac{1}{k} 
 \geq a \log ( \abs{A} ) . 
 \]
\end{lem}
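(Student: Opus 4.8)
The plan is to recover Matthews' bound by a clever randomization of the order in which the points of $A$ are visited, exactly as in the classical argument. Write $\abs{A}=p$ and enumerate $A=\{y_1,\dots,y_p\}$ in a \emph{uniformly random} order $\sigma$, independent of the chain. Starting from any $x\in A$, let $S_j$ be the first time the chain has visited the first $j$ points $y_{\sigma(1)},\dots,y_{\sigma(j)}$ in that order; then $S_p\le T_{cov}(A)$ pointwise (visiting the points in a prescribed order takes at least as long as visiting all of them), so $\E_x T_{cov}(A)\ge \E_x S_p$ for every fixed $\sigma$, hence also after averaging over $\sigma$.

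Next I would write $S_p=\sum_{j=1}^{p}(S_j-S_{j-1})$ with $S_0=0$ and lower bound each increment conditionally. The key observation is the standard "record" trick: conditionally on the set $\{y_{\sigma(1)},\dots,y_{\sigma(j)}\}$ and on the position of the chain, the increment $S_j-S_{j-1}$ is nonzero precisely when $y_{\sigma(j)}$ is the last of the first $j$ points to be visited, i.e. when it was not already hit before time $S_{j-1}$; by symmetry of the random ordering this happens with probability $1/j$ (given the unordered $j$-subset, each of its elements is equally likely to be $y_{\sigma(j)}$, and exactly one of them — the last visited — forces a genuinely new excursion). On that event the increment is a hitting time of $y_{\sigma(j)}$ from some other point of $A$, which has expectation at least $a$ by hypothesis. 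Taking expectations and using the tower property gives $\E S_p\ge a\sum_{j=1}^{p}\tfrac1j\cdot[\text{indicator }j\ge 2]=a\sum_{k=1}^{p-1}\tfrac1k$, since the $j=1$ term contributes $0$ (the first visited point costs nothing extra). Finally $\sum_{k=1}^{p-1}\tfrac1k\ge \log p$, and taking the minimum over the starting point $x\in A$ yields $\cov_-(A)\ge a\log\abs{A}$.

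The main obstacle is making the conditional lower bound $\E[S_j-S_{j-1}\mid \mathcal F]\ge a/j$ fully rigorous: one must fix the right $\sigma$-algebra (generated by the chain up to time $S_{j-1}$ together with the \emph{unordered} set of the first $j$ sampled points), verify that $\sigma(j)$ really is uniform on that $j$-set given this information, use the strong Markov property at the stopping time $S_{j-1}$ to identify the residual time as a hitting time $T(y_{\sigma(j)})$ started from $X_{S_{j-1}}\in A\setminus\{y_{\sigma(j)}\}$, and check the independence of $\sigma$ from the chain is preserved under this conditioning. Once the bookkeeping of stopping times and $\sigma$-algebras is set up carefully, the arithmetic ($\sum 1/k\ge\log\abs{A}$) is immediate, so I expect essentially all the work to be in that measure-theoretic set-up rather than in any inequality.
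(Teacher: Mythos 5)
Your argument is the classical randomized‑ordering proof of Matthews' bound, but as executed it does not give the stated inequality. With $p=\abs{A}$ and $\sigma$ a uniform ordering of all of $A$, the step‑$j$ increment is non‑trivial with probability exactly $1/j$ (given the unordered first $j$ points, $y_{\sigma(j)}$ is uniform among them), so your tower‑property computation yields $\E_x T_{cov}(A)\ge a\sum_{j=2}^{p}\frac1j = a(H_p-1)$. Your final display silently rewrites $\sum_{j=2}^{p}\frac1j$ as $\sum_{k=1}^{p-1}\frac1k$; these are not equal (they differ by $1-\frac1p$), and the loss is not cosmetic: since $H_p\le 1+\log p$, the bound $a(H_p-1)$ is at most $a\log p$ and strictly below it for $p\ge 2$ (for $p=2$ it gives $a/2<a\log 2$), so it proves neither the harmonic sum in the lemma nor even the weaker conclusion $\cov_-(A)\ge a\log\abs{A}$. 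There is also a definitional slip: you introduce $S_j$ as the first time the points $y_{\sigma(1)},\dots,y_{\sigma(j)}$ are visited \emph{in that order}, for which $S_p\ge T_{cov}(A)$ (the wrong direction for a lower bound); the object your increment analysis actually uses is $L_j$, the first time \emph{all} of the first $j$ sampled points have been visited, for which $L_p=T_{cov}(A)$ exactly.

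The missing $1-\frac1p$ comes from discarding the fact that the starting point $x\in A$ is already covered at time $0$. Two standard repairs recover the full bound within your approach: (i) take $\sigma$ uniform among orderings with $y_{\sigma(1)}=x$; then for $j\ge 2$, given the unordered first $j$ points, $y_{\sigma(j)}$ is uniform on the $j-1$ points other than $x$, while the last‑visited point of that set is never $x$, so the success probability becomes $1/(j-1)$ and you get $a\sum_{j=2}^{p}\frac1{j-1}=a\sum_{k=1}^{p-1}\frac1k$; or (ii) randomize only $A\setminus\{x\}$ and use $T_{cov}(A)=T_{cov}(A\setminus\{x\})$ under $\prob_x$, in which case the $j=1$ increment is a genuine hitting time of a point different from $x$ and contributes a full $a$ instead of the $0$ you assign it, again giving $a\bigl(1+\sum_{j=2}^{p-1}\frac1j\bigr)=a\sum_{k=1}^{p-1}\frac1k$. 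For comparison, the paper proves the lemma by a different, deterministic route: an induction on $\abs{A}$ that at each step picks a point $y\neq x$ with $\prob_x\bigl(T_{cov}(A)=T(y)\bigr)\ge \frac1{\abs{A}-1}$ and removes it; the factor $\frac1{\abs{A}-1}$ (rather than $\frac1{\abs{A}}$) appears there for exactly the reason your averaging gives away, namely that the last point covered can never be the starting point.
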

\begin{proof}
Let $x\in A$. Assuming that $\abs{A} \geq 2$
(otherwise the result is trivial) we have
\[
\sum_{y \in A, y\neq x} \prob_x \bigl( T_{cov} (A)  = T(y) \bigr)  = 1 .
\]
So there exists $y \in A$ such that 
\begin{equation}
\label{step-mat}
\prob_{x} \bigl( T_{cov} (A) = T(y) \bigr) \geq \frac 1 {\abs{A}-1}. 
\end{equation}
Let $A' = A \backslash \{ y \}$, 
let $S =  T_{cov} ( A' )$ and let $T = T_{cov} (A)$. 
Clearly
\[
T = S + ( T(y) \circ \sigma_S ) \mathbf{1}_{ \{S < T(y)\} } .
\]
By the strong Markov property
\[
 \E_{x}   T   = \E_x S + \E_{x} \bigl[   \bigl( \E_{X_S }  T (y) \bigr)  \mathbf{1}_{\{ S < T (y) \}}  \bigr] .
\]
On the event $\{S < T(y)\}$ the point $X_S$ is an element of $A$ different from $y$.
Therefore $\E_{X_S }  T (y) \geq a$. Together with~\eqref{step-mat} we obtain
 \[
 \E_{x}  T_{cov} (A)  \geq \E_{x} T_{cov} (A') + \frac a {\abs{A}-1} .
 \]
An obvious induction on $\abs{A}$ finishes the proof. 
 \end{proof}
The following lemma is proved the same way. 
\begin{lem}
\label{matthews2}
Let $H_1,\dots, H_m$ be non-empty subsets of $M$ satisfying
\[
\E_{x} T ( y ) \geq a , \forall (x,y)\in H_i\times H_j,\ \forall i\neq j .
\]
Then for all $x\in \cup_{i\leq m} H_i$
\[ 
 \E_{x} \max_{i\leq m}  T ( H_i) 
 \geq a \log (m) . 
 \]
\end{lem}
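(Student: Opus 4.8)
The plan is to mimic the proof of Lemma~\ref{matthews1}, replacing ``hitting each point of $A$'' by ``hitting each of the blocks $H_1,\dots,H_m$''. Write $U = \cup_{i\le m} H_i$ and, for a chain started at $x\in U$, let $\tau = \max_{i\le m} T(H_i)$ be the first time every block has been visited. The key observation is that at time $\tau$ the chain sits in the \emph{last} block to be hit; more precisely, there is some (random) index $j$ with $\tau = T(H_j)$, and conditionally on this the chain has, just before time $\tau$, already covered all the other blocks. So I would first argue that there exists a deterministic index $j$ with $\prob_x(\tau = T(H_j)) \ge 1/m$: indeed the events $\{\tau = T(H_j)\}$, $j=1,\dots,m$, cover the probability space (up to ties, which one breaks by any fixed rule, e.g. smallest index), hence one of them has probability at least $1/m$.

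Next, fix such a $j$, set $S = \max_{i\ne j} T(H_i)$ (the time to cover all blocks except $H_j$), and note the pathwise identity
\[
\tau = S + \bigl( T(H_j)\circ\sigma_S \bigr)\mathbf 1_{\{S < T(H_j)\}} .
\]
Apply the strong Markov property at time $S$: since on $\{S < T(H_j)\}$ the point $X_S$ lies in some $H_i$ with $i\ne j$, the separation hypothesis gives $\E_{X_S} T(H_j) \ge \E_{X_S} T(y) \ge a$ for any $y\in H_j$ — the crucial point being that $T(H_j)\le T(y)$, so a lower bound for one point of $H_j$ is a lower bound for the block. This yields
\[
\E_x \tau \;\ge\; \E_x S + a\,\prob_x\bigl(S < T(H_j)\bigr) \;\ge\; \E_x S + \frac{a}{m},
\]
because $\{S < T(H_j)\}$ contains $\{\tau = T(H_j)\}$ up to a null set (if the last block hit is $H_j$ then strictly before $\tau$ all other blocks were covered), and we chose $j$ so that $\prob_x(\tau = T(H_j))\ge 1/m$.

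Then I would set up the induction on $m$. The quantity $\E_x S$ is exactly the expected time, starting from $x$, to cover the $m-1$ blocks $\{H_i : i\ne j\}$, and those still satisfy the hypothesis of the lemma with the same $a$; so if I phrase the inductive statement as ``$\inf_{x\in\cup_{i\in I} H_i}\E_x\max_{i\in I} T(H_i)\ge a\sum_{k=1}^{|I|-1} 1/k$ for every index set $I$'', the bound $\E_x S \ge a\sum_{k=1}^{m-2} 1/k$ follows from the inductive hypothesis, and adding $a/m \ge a/(m-1)\cdot\tfrac{m-1}{m}$... — here one must be slightly careful: the clean telescoping needs the increment $a/(m-1)$, not $a/m$. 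So instead I would index the blocks in the order they are covered and, at the $k$-th step of the induction (going from $k-1$ uncovered-among-a-fixed-set to $k$), extract an index with probability $\ge 1/k$ among the remaining $k$ blocks, giving increment $a/k$ and hence $\E_x\tau \ge a\sum_{k=1}^{m-1} 1/k \ge a\log m$. The one point that needs real care — the expected main obstacle — is justifying $\prob_x(S < T(H_j)) \ge 1/m$ cleanly, i.e. checking that ``$H_j$ is the last block covered'' does imply ``$S < T(H_j)$'' almost surely (equality $S = T(H_j)$ would force two blocks to be covered at the same instant, which can happen only if they are hit simultaneously — impossible for disjoint $H_i$ since the chain occupies one state at a time, given $d(H_i,H_j)\ge a>0$ forces disjointness). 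With that in hand the rest is the same induction as in Lemma~\ref{matthews1}.
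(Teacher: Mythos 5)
Your overall architecture (decompose at the last block to be hit, apply the strong Markov property at $S=\max_{i\neq j}T(H_i)$, and run the induction that produces the harmonic sum) is exactly the route the paper intends when it says the lemma is ``proved the same way'' as Lemma~\ref{matthews1}, and the two points you single out as delicate --- breaking ties and the $1/m$ versus $1/(m-1)$ bookkeeping --- are indeed fine and are not where the difficulty lies. The genuine gap is in the step you call crucial. From $T(H_j)\leq T(y)$ one can only deduce $\E_{X_S}T(H_j)\leq \E_{X_S}T(y)$, not the reverse inequality you wrote: the hypothesis of the lemma bounds from below the expected hitting times of the \emph{individual points} of $H_j$, whereas the strong Markov step needs a lower bound on the expected hitting time of the \emph{set} $H_j$, that is on $\E_{X_S}\bigl[\min_{y\in H_j}T(y)\bigr]$, and an expectation of a minimum is not controlled from below by lower bounds on each $\E_{X_S}T(y)$ separately. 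This is not a repairable slip within the hypotheses as you use them: for the random walk on the complete graph $K_n$, take $H_1=\{x_0\}$, $H_2=M\setminus\{x_0\}$ and $x=x_0$; every cross hitting time equals $n-1$, so the pointwise separation holds with $a=n-1$, yet $\max_i T(H_i)=T(H_2)=1$ almost surely. So the quantity $\E_{X_S}T(H_j)$ can be of order $1$ even when all pointwise hitting times are of order $a$, and your chain of inequalities breaks at that point.

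What rescues the argument in the paper is information you did not use: when the lemma is invoked (through Proposition~\ref{matthews3}, in the proof of the growth condition) the blocks also have small diameter, $\Delta(H_i)\leq a/16$, while the pointwise separation is of order $a$. Then for $x'\in H_i$, $y\in H_j$, $i\neq j$, the strong Markov property at $T(H_j)$ gives $\E_{x'}T(y)\leq \E_{x'}T(H_j)+\max_{z\in H_j}\E_z T(y)\leq \E_{x'}T(H_j)+\Delta(H_j)$, hence $\E_{x'}T(H_j)\geq a-\Delta(H_j)$, which is the set-level lower bound your strong Markov step actually requires; with it (and at the cost only of constants) your induction goes through verbatim. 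So to make your proof correct you should either add the set-level hypothesis $\E_{x'}T(H_j)\geq a$ for $x'\in H_i$, $i\neq j$, or carry a diameter term as above; as written, the justification of the key inequality is backwards and the step fails.
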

An additional application of the strong Markov property yields the following refinement 
of the previous lemma. 
\begin{prop}
\label{matthews3}
Let $H_1,\dotsc,H_m$ be non-empty subsets of $M$
satisfying $\E_x T_y \geq a$ for all $(x,y)\in H_i\times H_j$, for all $i\neq j$.
Then
\begin{equation}
\label{growth-eq1}
\cov_- \bigl( \bigcup_{i\leq m} H_i \bigr) \geq a \log( m ) + \min_{i\leq m} \cov_- ( H_i ) .
\end{equation}
\end{prop}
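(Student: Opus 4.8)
The plan is to mimic the proof of Lemma~\ref{matthews1}, but keeping track of the residual cover time on whichever $H_i$ is hit last rather than just the hitting time of that $H_i$. Fix $x \in \bigcup_{i \leq m} H_i$; without loss of generality $m \geq 2$. For each index $j$, let $E_j$ be the event that $H_j$ is the last of the blocks to be entirely covered, i.e. $T_{cov}\bigl(\bigcup_i H_i\bigr) = T_{cov}(H_j)$. These events partition the sample space (up to ties, which we can break arbitrarily), so $\sum_{j} \prob_x(E_j) = 1$ and hence there is an index $j$ with $\prob_x(E_j) \geq 1/m$.

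Next I would decompose $T := T_{cov}\bigl(\bigcup_i H_i\bigr)$ along that index. Writing $A' = \bigcup_{i \neq j} H_i$ and $S = T_{cov}(A')$, on the event $E_j$ (more precisely on $\{S < T(H_j)\}$, which contains $E_j$ up to a null event once ties are handled) we have $T = S + (T(H_j) \circ \sigma_S) + (T_{cov}(H_j) \circ \sigma_{S'})$ where $S'$ is the first time $H_j$ is hit after $S$ — the point being that after covering everything outside $H_j$ and then entering $H_j$, one still has to cover $H_j$ itself. By the strong Markov property applied at $S$ and again at the subsequent entrance time into $H_j$, and using that on $\{S < T(H_j)\}$ the walk sits at a point of some $H_i$ with $i \neq j$, we get
\[
\E_x T \geq \E_x T_{cov}(A') + a\,\prob_x\bigl(S < T(H_j)\bigr) + \E_x\bigl[\mathbf 1_{E_j}\, \E_{X_{S'}} T_{cov}(H_j)\bigr].
\]
The middle term is at least $a/m$ by the choice of $j$ (since $\prob_x(S<T(H_j)) \geq \prob_x(E_j) \geq 1/m$), and the last term is at least $\prob_x(E_j)\,\cov_-(H_j) \geq \cov_-(H_j)/m$; here one has to be slightly careful, since what we really want in the end is $\min_i \cov_-(H_i)$, not a weighted contribution, so the cleanest route is to run the induction so that at each stage the ``leftover'' block contributes its full $\cov_-$.

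The way to make that clean is an induction on $m$: having peeled off $H_j$, apply the inductive hypothesis to $A' = \bigcup_{i\neq j} H_i$ to bound $\E_x T_{cov}(A') \geq \cov_-(A') \geq a\log(m-1) + \min_{i\neq j}\cov_-(H_i)$, and combine with the two extra terms above. The bookkeeping point is that the contribution of $H_j$ we recover is not cleanly $\cov_-(H_j)$ but a fraction of it; the standard fix (exactly as in the telescoping $\sum 1/k$ in Lemma~\ref{matthews1}) is to condition on \emph{which} block is covered last, peel that one, and observe that over the induction the $a/k$ terms sum to $a\log m$ while the $\cov_-$ terms can be arranged so that the single surviving minimum term $\min_i \cov_-(H_i)$ appears with coefficient $1$. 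Concretely, redo the argument conditioning on the identity of the last-covered block and using $\sum_{y} \prob_x(E_y \mid \cdot) = 1$ to extract, at each level, one block with conditional probability $\geq 1/(\text{number remaining})$, so that iterating produces $\min_i \cov_-(H_i) + a\sum_{k=1}^{m-1} 1/k \geq \min_i \cov_-(H_i) + a\log m$.

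The main obstacle I expect is precisely this last point: getting the clean additive term $\min_{i}\cov_-(H_i)$ (coefficient exactly one, no stray $1/m$ factor) rather than a weighted average of the $\cov_-(H_i)$. The resolution is to never ``lose'' a block to a small probability: when we pick the last-covered block $j$ with $\prob_x(E_j)\geq 1/m$, we spend that $1/m$ on the $a\log m$ part and, for the $\cov_-(H_j)$ part, instead note that whatever block ends up being covered last, the walk must at some point enter it from outside and then cover it, contributing at least $\cov_-$ of \emph{that} block, hence at least $\min_i \cov_-(H_i)$ deterministically (in expectation). So the truly clean statement is: $\E_x T \geq \E_x[\,\text{time to cover all but the last block}\,] + (\text{extra hitting cost}) + \min_i\cov_-(H_i)$, and then the first summand is handled by induction and the extra hitting costs telescope to $a\log m$. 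This uses only the strong Markov property and the monotonicity lemma already proved.
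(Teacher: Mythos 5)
There is a genuine gap, and it comes from conditioning on the last \emph{covered} block where the argument needs the last \emph{hit} block. Your claim that the event $E_j=\{T_{cov}(\cup_i H_i)=T_{cov}(H_j)\}$ is contained (up to null sets) in $\{S<T(H_j)\}$ with $S=T_{cov}(A')$, $A'=\cup_{i\neq j}H_i$, is false: the walk can enter $H_j$ early, visit some of its points, leave, finish covering all the other blocks, and only later complete $H_j$; then $H_j$ is covered last although $T(H_j)<T_{cov}(A')$. Consequently the decomposition $T=S+T(H_j)\circ\sigma_S+T_{cov}(H_j)\circ\sigma_{S'}$ fails on $E_j$, the middle term $a\,\prob_x(S<T(H_j))\geq a/m$ is unjustified, and the residual time after $S$ is not bounded below by $\cov_-(H_j)$: at time $S$ part of $H_j$ may already have been visited, so what remains is to cover a proper subset of $H_j$, possibly started from a point outside $H_j$, and $\cov_-$ gives no lower bound for that. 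Your fallback --- ``whatever block is covered last, the walk must enter it from outside and then cover it, contributing at least $\min_i\cov_-(H_i)$'' --- is true in isolation (the strong Markov property at $T(H_j)$ gives $\E_x\bigl[T_{cov}(H_j)-T(H_j)\bigr]\geq\cov_-(H_j)$ for every $j$), but that stretch of time may overlap the time intervals that produce the $a\log m$ hitting cost, and you never rule out this double counting; the conditional version you write down only yields $\prob_x(E_j)\,\cov_-(H_j)$, a $1/m$ fraction of what is needed, which is exactly the bookkeeping obstacle you acknowledge but do not resolve.

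The paper's proof removes all of these difficulties with a single different choice: take $S=\max_{i\leq m}T(H_i)$, the first time every block has been \emph{hit}. Then $\E_x S\geq a\log m$ by Lemma~\ref{matthews2}, and at time $S$ the chain stands at $X_S$ inside the block it has just hit for the first time, so that block is entirely unvisited except for $X_S$; the strong Markov property at $S$ therefore gives an additional expected time at least $\E_{X_S}T_{cov}(H_i)\geq\cov_-(H_i)\geq\min_{i\leq m}\cov_-(H_i)$, and the two contributions occupy disjoint time intervals (before and after $S$), hence simply add. No induction and no conditioning on the identity of the last-covered block are needed. To repair your argument you would have to replace ``last covered'' by ``last hit'' throughout, at which point it becomes the paper's proof.
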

\begin{proof}
Let $x \in \cup_{i\leq m} H_i$.
Let  $S = \max_{i\leq m} T(H_i)$ and $T = T_{cov} ( \cup_{i\leq m} H_i )$. 
If $S = T(H_i)$ then at time $S$ the chain has yet to visit every point of $H_i\backslash\{X_S\}$. 
Therefore
\[
T \geq S + \sum_{i=1}^m \bigl( T_{cov} (H_i) \circ \sigma_S \bigr) \mathbf{1}_{\{ S = T(H_i)\}}
\]
Using the strong Markov property, we get
\[
\begin{split}
\E_x   T 
& \geq \E_x S + \sum_{i=1}^m \E_x \bigl[  \bigl( \E_{X_S}  T_{cov} ( H_i ) \bigr) \mathbf{1}_{\{ S = T(H_i)\}}  \bigr]  \\
& \geq  \E_x S + \min_{i\leq m} \cov_- ( H_i ). 
\end{split}
\]
Together with the previous lemma we get the result. 
\end{proof}
We are close to desired growth condition. 
We would like to obtain the inequality~\eqref{growth-eq1} under the weaker hypothesis
\[
d( x,y ) = \E_x T(y) + \E_y T(x)  \geq a , \  \forall x,y \in H_i \times H_j , i\neq j .
\]
This is done in the next section.
Roughly speaking, reversibility insures
that for a reasonable proportion of $x$ and $y$ the 
hitting times $\E_x T(y)$ and $\E_y T(x)$ are of the same 
of order of magnitude.
\section*{Reversibility}
Again this part of the argument is taken from 
Kahn, Kim, Lovasz and Vu's article~\cite{KKLV}.
\\
We start with a simple lemma concerning directed graphs. 
Given a directed graph $G = (V , E)$, a path of $G$ 
is a sequence $x_1,\dotsc,x_m$ of vertices
satisfying  $(x_i , x_{i+1}) \in E$ for $i\leq m$. The length 
of such a path is defined to be $m$. An independent  
set is a subset $A$ of $V$ satisfying $(x,y)\notin E$ for all $x,y$ in $A$. 
\begin{lem}
If every path of $G$ has length at most $m$ then $G$ 
has an independent set of cardinality at least $\abs{V} / m$. 
\end{lem}
This is standard, but we still sketch the argument.
It is easy to show by induction on $m$ that 
$G$ is then $m$-colorable: it is possible
to map the vertices of $G$ to $\{1,\dots,m\}$
in such a way that connected points have different
images. 
Then by the pigeon hole
principle, at least $\abs{V} / m$ vertices have the same image,
which is the result.\\
From now on the chain $(X_n)_{n\geq 0}$ is 
assumed to be reversible. Consequently, we
have the following commuting property 
for hitting times. 
\begin{lem}
For every sequence $x_1 ,\dotsc , x_m$ of elements of $M$ we have
\begin{equation}
\label{cycle}
\begin{split}
\E_{x_1} T( x_2 ) & + \dotsb + \E_{x_{m-1}} T ( x_m) + \E_{x_m} T(x_1)  \\
& = \E_{x_1} T( x_m ) +  \E_{x_{m}} T ( x_{m-1}) + \dotsb + \E_{x_2} T(x_1)  .
\end{split}
\end{equation}
\end{lem}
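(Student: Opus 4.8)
The plan is to deduce the cycle identity~\eqref{cycle} from the well-known fact that, for a reversible chain, the commute time between two states is symmetric in a strong sense: the quantity $\E_x T(y) + \E_y T(x)$ can be written as $C_{xy}$, a ``resistance''-type quantity, but more useful here is the observation that hitting-time sums around a cycle are invariant under reversal. Concretely, I would first recall the identity
\[
\E_x T(y) + \E_y T(z) + \E_z T(x) = \E_x T(z) + \E_z T(y) + \E_y T(x),
\]
valid for any three states of a reversible chain. This is the $m=3$ case of~\eqref{cycle}, and it is the real content; the general case follows by an easy induction that I describe below. The three-point identity itself is classical (it appears in Aldous--Fill~\cite{AF} and in~\cite{KKLV}); one clean way to see it is to use the formula $\E_x T(y) = \sum_z \pi(z)\, (Z_{yy} - Z_{yx} + \text{something})$ in terms of the fundamental matrix $Z = (I - P + \mathbf 1\pi)^{-1}$, together with the reversibility relation $\pi(x) Z_{xy} = \pi(y) Z_{yx}$; expanding both sides of the claimed identity, all asymmetric terms cancel. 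Alternatively, and more in the spirit of the paper, one can argue probabilistically: $\E_x T(y) + \E_y T(z) + \E_z T(x)$ is the expected length of a tour visiting $y$, then $z$, then returning to $x$ when started from $x$; reversibility of the chain means the time-reversed walk has the same law, and reversing a tour $x \to y \to z \to x$ gives a tour $x \to z \to y \to x$, whose expected length is $\E_x T(z) + \E_z T(y) + \E_y T(x)$.

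For the induction, suppose~\eqref{cycle} holds for cycles of length $m-1$. Given $x_1, \dots, x_m$, apply the three-point identity to the triple $x_1, x_{m-1}, x_m$ to swap the last two ``corners'': this replaces the segment $\dots + \E_{x_{m-1}}T(x_m) + \E_{x_m}T(x_1)$ by $\dots + \E_{x_{m-1}}T(x_1)$ after absorbing the extra terms $\E_{x_1}T(x_m) + \E_{x_m}T(x_{m-1})$, leaving a length-$(m-1)$ cycle $x_1, \dots, x_{m-1}$ to which the induction hypothesis applies, plus exactly the two terms that appear at the front of the reversed sum. Bookkeeping the terms carefully then yields the full reversed expression. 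I would write this step out with some care since it is the only place where an off-by-one in the indexing could slip in.

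I expect the main obstacle to be purely expository rather than mathematical: choosing the cleanest route to the base case so that the fundamental-matrix computation (or the reversal argument) is transparent, and then organizing the induction so that the telescoping of the ``correction terms'' $\E_{x_i}T(x_{i+1}) + \E_{x_{i+1}}T(x_i)$ is manifest. If I use the fundamental matrix, I should double-check the sign conventions in the formula $\E_x T(y) = (Z_{yy} - Z_{xy})/\pi(y)$ (with the convention $T(y)$ being the first hitting time), since different sources normalize $Z$ differently; once that is fixed, reversibility gives the cancellation in one line. No deep idea beyond the three-point case is needed, so the write-up should be short.
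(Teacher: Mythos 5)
The paper does not actually prove this lemma: it simply cites \cite[Lemma 10.10]{LPW}, which is (essentially) the three-point cycle identity you take as your base case. So your proposal supplies a self-contained argument where the paper defers to a reference, and its structure is sound. Your induction step checks out: adding the $(m-1)$-cycle identity for $x_1,\dotsc,x_{m-1}$ to the three-point identity for $(x_1,x_{m-1},x_m)$ and cancelling the two cross terms $\E_{x_1}T(x_{m-1})$ and $\E_{x_{m-1}}T(x_1)$ (legitimate since all hitting times have finite expectation for an irreducible positive recurrent chain) gives exactly the $m$-cycle identity. Two caveats on the base case. First, the fundamental-matrix route with $\E_x T(y)=(Z_{yy}-Z_{xy})/\pi(y)$ and $\pi(x)Z_{xy}=\pi(y)Z_{yx}$ does make everything cancel in one line, but it presupposes the existence of $Z$, which is clean only for finite state spaces, whereas the lemma is invoked in the paper for a general positive recurrent discrete chain; if you go this way you should either restrict to finite $M$ (enough for the paper's Theorem~\ref{DLPthm}-type statements) or justify the construction of $Z$ in the positive recurrent setting. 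Second, your time-reversal sketch is too loose as stated: reversing the concatenation of first-passage segments $x\to y\to z\to x$ does not literally produce the first-passage decomposition $x\to z\to y\to x$ of the reversed path, so one must run the standard rigorous version (stationary chain, or regeneration at successive returns to $x$, as in the proof of \cite[Lemma 10.10]{LPW}); note that this reversal argument, once set up properly, yields the full $m$-point identity directly, making your induction unnecessary, though the induction is a perfectly valid fallback if you only prove the three-point case.
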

We refer to~\cite[Lemma 10.10]{LPW} for a proof. 
\begin{cor}
\label{rev-cor}
Let $A$ be a subset of $M$ and $a>0$. If $\Delta ( A, d ) \leq 16 a$
and if $d(x,y) \geq a$ for all $x\neq y$ in $A$ then there exists a subset 
$A'$ of $A$ satisfying
\begin{itemize}
\item $\abs{A'} \geq \abs{A} / 33 $. 
\item $\E_x T(y) \geq a/4$ for all $x\neq y$ in $A'$. 
\end{itemize}
\end{cor}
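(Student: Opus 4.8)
The plan is to use the commuting identity~\eqref{cycle} to show that the directed graph recording ``short'' hitting times between points of $A$ contains only short directed paths, and then to extract $A'$ as a large independent set via the lemma on directed graphs.

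First I would introduce the directed graph $G = (A,E)$ by declaring $(x,y)\in E$ exactly when $x\neq y$ and $\E_x T(y) < a/4$. A non-edge $(x,y)\notin E$ with $x\neq y$ means precisely $\E_x T(y)\geq a/4$, so if $A'$ is an independent set of $G$ then $\E_x T(y)\geq a/4$ for all $x\neq y$ in $A'$, which is the second required property. Since $d(x,y)\geq a$ for all $x\neq y$ in $A$, at most one of $(x,y),(y,x)$ can lie in $E$ (both would force $d(x,y)<a/2$); more importantly, whenever $(x,y)\in E$ the reverse direction satisfies $\E_y T(x) = d(x,y) - \E_x T(y) > 3a/4$.

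The heart of the argument is the claim that every directed path of $G$ has at most $32$ vertices. Take a directed path $x_1\to x_2\to\dotsb\to x_m$ with distinct vertices and apply~\eqref{cycle} to the sequence $x_1,\dotsc,x_m$. On the left-hand side the first $m-1$ terms $\E_{x_i}T(x_{i+1})$ are each $<a/4$ because they are edges of $G$, while the last term satisfies $\E_{x_m}T(x_1)\leq d(x_m,x_1)\leq\Delta(A,d)\leq 16a$, so the left-hand side is $< (m-1)a/4 + 16a$. On the right-hand side the $m-1$ terms $\E_{x_{i+1}}T(x_i)$ are each $>3a/4$ by the observation above, so the right-hand side is $>(m-1)\cdot 3a/4$. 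Equating the two sides gives $(m-1)a/2 < 16a$, i.e.\ $m\leq 32$. The same computation applied to a would-be directed cycle $x_1\to\dotsb\to x_k\to x_1$ gives a contradiction (now all $k$ forward terms are $<a/4$ and all $k$ backward terms are $>3a/4$), so $G$ is acyclic; hence every directed path of $G$ has distinct vertices and therefore at most $32$ of them.

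Finally I would invoke the lemma on directed graphs: since every path of $G$ has length at most $32$, there is an independent set $A'\subset A$ with $\abs{A'}\geq\abs{A}/32\geq\abs{A}/33$. By construction of $E$, we have $\E_x T(y)\geq a/4$ for all $x\neq y$ in $A'$, which is the second bullet. I expect the only delicate point to be getting the direction of the reversal in~\eqref{cycle} right and tracking the constants; otherwise the two hypotheses $\Delta(A,d)\leq 16a$ and $d(x,y)\geq a$ enter exactly once each, in the upper bound on the left-hand side and the lower bound on the right-hand side respectively, and everything else is a direct substitution.
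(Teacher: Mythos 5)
Your proof is correct and follows essentially the same route as the paper: the directed graph of pairs with $\E_x T(y)$ small, the reversibility identity~\eqref{cycle} combined with the diameter bound to cap the length of directed paths, and the independent-set lemma to extract $A'$. The only differences are cosmetic (a strict rather than weak inequality in the edge definition, giving the bound $32$ instead of $33$, and an explicit acyclicity check which the paper's computation handles implicitly since it never uses distinctness of the vertices along the path).
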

\begin{proof}
We define a graph $G$ with vertex set $A$ by 
saying that the edge $(x,y)$ is present
if $x\neq y$ and $\E_x T (y) \leq a / 4$.
Let $x_1,\dotsc,x_m$ be a path of $G$. 
Then the inequalities
\[ 
\begin{split}
\E_{x_i} T(x_{i+1} ) & \leq a/4 \\
  \E_{x_{i+1}} T(x_i) & \geq 3a/4
\end{split}
 \]
 and equation~\eqref{cycle} give
\[
\frac {(m-1) a} 4  + \E_{x_m} T(x_1) \geq \frac{3(m-1) a}{4} + \E_{x_1} T(x_m) .
\]
Together with the bound on the diameter of $A$ we obtain $m-1 \leq 32$. 
Therefore $G$ has an independent set of cardinality at least $ \abs{A} / 33$. 
This is our set $A'$. 
\end{proof}
\subsection*{The growth condition for the cover time}
\begin{prop}
The functional $A \mapsto \cov_- (A) $ satisfies 
the growth condition with parameters $r=16$ and $\tau=5$. 
\end{prop}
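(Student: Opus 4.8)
The plan is to deduce the growth condition from the refined Matthews bound of Proposition~\ref{matthews3}, using reversibility (through Corollary~\ref{rev-cor}) to pass from lower bounds on the commute distance to lower bounds on one-sided hitting times. Fix a step $n$, a scale $a>0$, and non-empty sets $H_1,\dots,H_m$ with $m=N_{n+5}$ satisfying conditions (1)--(3) of Definition~\ref{growth-def} with $r=16$. If $\cov_-(\bigcup_i H_i)=+\infty$ there is nothing to prove, so we may assume $\bigcup_i H_i$ is finite. Since $d(H_i,H_j)\geq a>0$ the $H_i$ are pairwise disjoint; I would pick $t_i\in H_i$ and set $A=\{t_1,\dots,t_m\}$, so that $\abs{A}=m$, $d(t_i,t_j)\geq a$ for $i\neq j$, and $\Delta(A,d)\leq\Delta(\bigcup_i H_i)\leq 16a$. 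This is exactly the situation of Corollary~\ref{rev-cor} at scale $a$, which produces $A'\subset A$ with $\abs{A'}\geq\abs{A}/33=m/33$ and $\E_xT(y)\geq a/4$ for all $x\neq y$ in $A'$. Relabelling, we may assume $A'=\{t_1,\dots,t_{m'}\}$ with $m'\geq m/33$ and $t_i\in H_i$ for $i\leq m'$.

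Next I would transfer this one-sided estimate from the points $t_i$ to the full sets $H_i$. Let $i\neq j$ with $i,j\leq m'$ and pick $x\in H_i$, $y\in H_j$. Since $x,t_i\in H_i$ and $\Delta(H_i)\leq a/16$, the identity $\E_{t_i}T(x)+\E_xT(t_i)=d(x,t_i)$ forces $\E_{t_i}T(x)\leq a/16$, and likewise $\E_yT(t_j)\leq a/16$. Applying the triangle inequality $\E_uT(w)\leq\E_uT(v)+\E_vT(w)$ twice,
\[
\E_{t_i}T(t_j)\leq\E_{t_i}T(x)+\E_xT(t_j)\leq\E_{t_i}T(x)+\E_xT(y)+\E_yT(t_j),
\]
so that $\E_xT(y)\geq a/4-a/16-a/16=a/8$ for every $(x,y)\in H_i\times H_j$. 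Proposition~\ref{matthews3} applied to $H_1,\dots,H_{m'}$ with constant $a/8$ then yields
\[
\cov_-\Bigl(\bigcup_{i\leq m'}H_i\Bigr)\geq\frac a8\log(m')+\min_{i\leq m'}\cov_-(H_i).
\]
Since $\cov_-$ is non-decreasing for the inclusion (established above) we have $\cov_-(\bigcup_{i\leq m}H_i)\geq\cov_-(\bigcup_{i\leq m'}H_i)$, while $\min_{i\leq m'}\cov_-(H_i)\geq\min_{i\leq m}\cov_-(H_i)$; it therefore only remains to check $\frac a8\log(m')\geq a2^n$, i.e. $\log(m')\geq 2^{n+3}$. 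From $m'\geq N_{n+5}/33=2^{2^{n+5}}/33$ and $\log 2\geq\tfrac12$ one gets $\log(m')\geq 2^{n+5}\log 2-\log 33\geq 2^{n+4}-\log 33\geq 2^{n+3}$ for every $n\geq 0$, which closes the argument.

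The part needing care is the juggling of constants. Reversibility only controls the symmetric commute distance, and the directed-graph colouring behind Corollary~\ref{rev-cor} already costs a factor, leaving only $\E_xT(y)\geq a/4$ on the selected points; the diameter bound $\Delta(H_i)\leq a/r$ must then be tight enough that the transfer step still leaves a positive gap — with $r=16$ it loses $2\cdot a/16=a/8$ and keeps exactly $a/8>0$. Similarly $\tau$ must be large enough that $N_{n+\tau}/33$ remains doubly exponential enough to dominate $2^{n+3}$ uniformly in $n$: with $\tau=4$ the inequality $\log(m')\geq 2^{n+3}$ already fails at $n=0$, which is why $\tau=5$ is taken.
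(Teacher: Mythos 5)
Your proof is correct and follows essentially the same route as the paper: choose representatives $t_i\in H_i$, apply Corollary~\ref{rev-cor} to extract a proportion $1/33$ of indices with one-sided hitting times at least $a/4$, transfer this to all of $H_i\times H_j$ using $\Delta(H_i)\leq a/16$, and conclude with Proposition~\ref{matthews3}. The only (immaterial) difference is the final numerical check, where you bound $\log(m')\geq 2^{n+5}\log 2-\log 33\geq 2^{n+3}$ directly instead of the paper's chain $N_{n+5}/33\geq N_{n+5}/N_3\geq N_{n+4}\geq\e^{8\cdot 2^n}$.
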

\begin{proof}
Let $n\in \mathbb N$, let $a>0$ and $m=N_{n+5}$. Let $H_1,\dotsc,H_m$
satisfy 
\begin{enumerate}
\item $\Delta ( \cup_{i\leq m}  H_i ) \leq 16 a$.
\item $ d ( H_i , H_j ) \geq a$, for all $i\neq j$.
\item $\Delta ( H_i ) \leq a/16$ for all $ i\leq m$.
\end{enumerate}
Let $x_1,\dotsc,x_m$ belong to  $H_1, \dotsc, H_m$ respectively. 
By the first two properties and Corollary~\ref{rev-cor}, there exists a subset 
$I$ of $\{1,\dotsc,m\}$ satisfying
\begin{itemize}
\item $\abs{I} \geq m / 33$. 
\item $\E_{x_i} T (x_j ) \geq a/4$ for every $i\neq j$ in $I$.
\end{itemize}
Let $i\neq j$ in $I$ and let $(x,y) \in H_i \times H_j$. Then 
\[
\E_{x} T(y) \geq \E_{x_i} T (x_j) - \E_{x_i} T (x) - \E_y T( x_j ) \geq \frac a 4 - \frac a {16} - \frac a {16} = \frac a 8.
\]
Proposition~\ref{matthews3} gives
\[
\cov_- \bigl( \bigcup_{i\in I} H_i \bigr) \geq \frac a 8 \log ( \abs{ I} ) + \min_{i \in I} \cov_- ( H_i ). 
\]
Since
\[
 \abs{I}  \geq N_{n+5} / 33 \geq N_{n+5} / N_{3} \geq N_{n+4} \geq \e^{ 8 \cdot 2^n }  
\]
we obtain
\[
\cov_- \bigl( \bigcup_{i\leq m} H_i \bigr) \geq a 2^n  + \min_{i \leq m} \cov_- ( H_i ) , 
\]
which is the result.
\end{proof}
Then, by Theorem~\ref{growth-thm} we obtain 
\[
\gamma_1 ( M , d ) \leq L ( \cov_- ( M ) + \Delta ( M , d ) ) . 
\]
Obviously we can replace $M$ by any subset $A$ of 
$M$ in this inequality: if a functional $F$ satisfy the growth condition
on $(M,d)$ then it also satisfies it on $(A,d)$. 
\section*{Appendix}
We have seen in the introduction that for any metric space $(M,d)$ 
\begin{equation}
\label{appendix-eq}
\bigl[ \gamma_2 ( M , \sqrt{d} ) \bigl]^2 \leq  C  \log ( \log \abs{M} )  \gamma_1 (M,d) . 
\end{equation}
We show in this appendix that this is sharp and that the 
example saturating the inequality can be chosen to be 
the state space of a reversible Markov chain equipped with the 
commute distance. The example is taken from~\cite{KKLV} and was pointed
out to the author by James Lee. 
 \\
Let $M$ be a rooted tree of depth $D$ (large enough) satisfying
\begin{itemize} 
\item nodes at depth $i\leq D-1$ have $N_i +1$ children,
\item edges between depth $i$ and depth $i+1$ have multiplicity $2^i$,
\end{itemize}
and let $X$ be the random walk on this graph. 
The probability measure defined by $\pi (x) = d(x) / 2 E$ for every $x$, where 
$d(x)$ is the number of edges (counted with multiplicity) starting from $x$ and
$E$ is the total number of edges, is reversible. 
Let us compute the commute distance $d$. 
Because of the tree structure it is easily seen that
\begin{equation}
\label{tree-distance}
d ( x, y ) = \sum_{i=0}^{n-1}  d(x_i,x_{i+1})
\end{equation}
where $x_0,\dotsc,x_n$ is the shortest path from $x$ to $y$. 
Therefore it is enough to compute $d(x,y)$ when $x$ and $y$
are neighbors, in which case we use the formula (see~\cite{AF})
\[
\prob_x ( T(y) < T^1 (x) ) = \frac 1 {\pi (x) d(x,y)} . 
\] 
Because of the tree structure again $\prob_x ( T(y) < T^1 (x) )$ is just the transition 
probability from $x$ to $y$. We obtain 
\[
d( x, y ) = 2E\cdot 2^{-i}
\]
when $(x,y)$ is an edge between depth $i$ and depth $i+1$. 
When $x$ and $y$ are any two nodes of $M$, equality~\eqref{tree-distance} 
then implies that
\begin{equation}
\label{distance-depth}
E\cdot  2^{-i+1} \leq d ( x,y ) \leq  E \cdot 2^{-i+3}
\end{equation}
where $i$ is the depth of their closest common ancestor.  
\begin{prop}
There is a universal constant $C$ such that
\begin{align}
\label{example-gamma1}
\frac {D \cdot E} C & \leq \gamma_1 ( M , d )  \leq  C \cdot D \cdot E\\
\label{example-gamma2}
\frac {D \cdot \sqrt{E} } C & \leq  \gamma_2 ( M , \sqrt d ) \leq C \cdot D \cdot \sqrt{E} .
\end{align}
\end{prop}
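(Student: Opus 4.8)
The plan is to prove the two upper bounds in \eqref{example-gamma1}--\eqref{example-gamma2} by exhibiting a nearly optimal admissible sequence of partitions, and the two lower bounds by a counting argument that applies to \emph{every} admissible sequence. I shall use \eqref{distance-depth}, namely that $E\cdot 2^{-i+1}\le d(x,y)\le E\cdot 2^{-i+3}$ when $i$ is the depth of the closest common ancestor of $x$ and $y$, and the fact that $M$ has exactly $\prod_{k=0}^{j-1}(N_k+1)$ nodes at depth $j$.

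\emph{Upper bounds.} Let $\mathcal A_n$ be the partition of $M$ whose parts are the subtrees rooted at the nodes of depth $n$, together with the singletons $\{u\}$ for $u$ of depth $<n$. Its number of parts is the number of nodes of $M$ of depth $\le n$, which is at most $N_n$ by a routine estimate (the $N_k$ grow doubly exponentially and $\sum_k1/N_k<\infty$); so $(\mathcal A_n)_{n\ge0}$ is admissible, and it is clearly increasing. If $x$ has depth $\ge n$ then $A_n(x)$ is the subtree rooted at the depth-$n$ ancestor of $x$, so any two of its points have a common ancestor of depth $\ge n$ and $\Delta(A_n(x))\le 8E\cdot 2^{-n}$; if $x$ has depth $<n$ then $A_n(x)=\{x\}$. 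Hence for every $x$,
\[
\sum_{n\ge0}2^{n}\Delta(A_n(x))\le 8E(D+1),\qquad \sum_{n\ge0}2^{n/2}\sqrt{\Delta(A_n(x))}\le\sqrt{8E}\,(D+1),
\]
which gives $\gamma_1(M,d)\le C\,D\,E$ and $\gamma_2(M,\sqrt d)\le C\,D\,\sqrt E$.

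\emph{Lower bounds.} Fix an arbitrary admissible sequence $(\mathcal A_n)_{n\ge0}$, let $L$ be the number of leaves (nodes of depth $D$), and let $c$ be a universal constant to be chosen. Call a leaf $x$ \emph{bad at step $n$} if $\Delta(A_n(x))<2^{1-c}E\cdot 2^{-n}$. By \eqref{distance-depth} this forces every pair of points of $A_n(x)$ to have its closest common ancestor at depth $\ge n+c+1$; since in a rooted tree the depth of the common ancestor of a set equals the minimum, over all pairs of its points, of the depth of their common ancestor, the set $A_n(x)$ then lies inside one of the $\prod_{k=0}^{n+c}(N_k+1)\le N_{n+c+1}$ subtrees rooted at depth $n+c+1$, each of which contains at most $2L/N_{n+c+1}$ leaves. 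Since $\abs{\mathcal A_n}\le N_n$, at most $2LN_n/N_{n+c+1}$ leaves are bad at step $n$. As $N_{n+c+1}/N_n$ grows doubly exponentially in $n$, one may choose $c$ with $\sum_{n\ge0}2N_n/N_{n+c+1}<1/2$; then fewer than $L/2$ leaves are bad at some step $n\le D-c-1$, so there is a leaf $x$ with $\Delta(A_n(x))\ge 2^{1-c}E\cdot 2^{-n}$ for all such $n$. For that $x$,
\[
\sum_{n\ge0}2^{n}\Delta(A_n(x))\ge 2^{1-c}E\,(D-c),\qquad \sum_{n\ge0}2^{n/2}\sqrt{\Delta(A_n(x))}\ge\sqrt{2^{1-c}E}\,(D-c).
\]
Since $D$ is large and the partition was arbitrary, $\gamma_1(M,d)\ge D\,E/C$ and $\gamma_2(M,\sqrt d)\ge D\,\sqrt E/C$.

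The only substantive step is the lower bound, and it is purely combinatorial: at step $n$ an admissible partition cuts $M$ into at most $N_n$ pieces, whereas $M$ has already branched into roughly $N_{n+c}\gg N_n$ pairwise far subtrees, so $\Delta(A_n(x))\gtrsim E\cdot 2^{-n}$ for all but a negligible fraction of the leaves. (Note also that $\log\abs M\asymp 2^{D}$, hence $D\asymp\log\log\abs M$, so the four estimates above exhibit an example saturating \eqref{appendix-eq}.)
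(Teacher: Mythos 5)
Your proof is correct; both bounds go through. Your upper bound is essentially the paper's: the paper takes the nets $S_i$ of all vertices of depth at most $i$ (shifted by three indices to meet the cardinality condition) in the functional form $\inf\sup_x\sum_i 2^i d(x,M_i)$ of $\gamma_1$, while you package the same idea directly as the admissible partition into depth-$n$ subtrees plus shallow singletons; the two are interchangeable. The lower bound is where you genuinely diverge. The paper again works with the net formulation and builds, greedily from the root, a path $x_0,\dotsc,x_D$ such that $x_{i+1}$ and all its offspring avoid $M_i$ (possible because a depth-$i$ node has $N_i+1$ children while $\abs{M_i}\leq N_i$), so that $d(x_D,M_i)\geq E\cdot 2^{-i+1}$ by \eqref{distance-depth}; summing gives the bound, and one then invokes the equivalence (quoted from Talagrand) between the net and partition forms of $\gamma_1$. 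You instead stay with the partition definition and argue by averaging: at scale $n$ an admissible partition has at most $N_n$ pieces, a piece of small diameter is trapped in a single depth-$(n+c+1)$ subtree by \eqref{distance-depth} and the fact that the depth of the common ancestor of a set is the minimum over pairs, and each such subtree carries only a $2/N_{n+c+1}$ fraction of the leaves; a union bound over scales then produces one leaf that sees diameter $\gtrsim E\cdot 2^{-n}$ at every scale $n\leq D-c-1$. Your route costs a slightly longer counting argument but buys two things: it never needs the net-versus-partition equivalence for $\gamma_1$ and $\gamma_2$, and it treats \eqref{example-gamma1} and \eqref{example-gamma2} simultaneously (the paper proves \eqref{example-gamma1} and says the other is identical). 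It even shows the witnessing leaf can be taken among more than half of all leaves, which is stronger than what is needed. The only places where you wave your hands --- that the number of vertices of depth at most $n$ is at most $N_n$, and that a universal $c$ makes $\sum_n 2N_n/N_{n+c+1}<1/2$ (already $c=1$ works) --- are indeed routine, and the first could in any case be repaired by shifting indices by a constant as the paper does, at the price of a constant factor.
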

Since $D$ is of the order of $\log ( \log \abs{M} )$, this shows that~\eqref{appendix-eq} is sharp 
(up to the constant). 
\begin{proof}
Let us start with the upper bound of~\eqref{example-gamma1}. 
It is more convenient to use the following definition for $\gamma_1$:
\[
\gamma_1 ( M , d ) = \inf \sup_{x\in M} \sum_{i=0}^{+\infty} 2^i d( x , M_i) 
\] 
where the infimum is taken over every sequence $( M_i )_{i\in \N}$ of subsets
of $M$ satisfying the cardinality condition $\abs{M_i} \leq N_i$ for every $i$. 
It is well known (see~\cite{TALA}) that this definition coincides with the one with partitions, 
up to a universal factor. 
\\
For $0\leq i \leq D$ let $S_i$ be the set of vertices of depth at most $i$. 
Using~\eqref{distance-depth} we obtain $d ( x , S_i) \leq E \cdot 2^{-i+3}$ 
for every $x\in M$. Therefore 
\[
\sup_{x\in M} \sum_{i=0}^{+\infty} 2^{i} d(x,S_i)  \leq E \sum_{i=0}^D 2^i 2^{-i+3}  = 8E\cdot  (D+1).
\]
Besides, it is easily shown that 
\[
\abs{S_i} \leq N_{i+3}  .
\]
The sequence $(S_i)_{n\in \N}$ does not quite satisfies the right cardinality condition,
but this is not a big deal. If we shift the sequence by letting $M_0=M_1=M_2=S_0$ 
and $M_i = S_{i-3}$ for $i \geq 3$, we still have
\[
\sup_{x\in M} \sum_{i=0}^{+\infty} 2^i d(x,M_i) \leq  C \cdot E \cdot D 
\]
for some universal $C$, which proves the upper bound of~\eqref{example-gamma1}.
\\
To prove the lower bound we need to show that the previous 
sequence of approximations is essentially optimal. Let $(M_i)_{i\geq 0}$
be a sequence of subsets of $M$ satisfying $\abs{M_i} \leq N_i$ for every $i$. 
A vertex $x$ of depth $i \leq D-1$ has $N_i +1$ children. So at least one them, 
call it $y$, has the following property: neither $y$ nor any of its offsprings 
belong to $M_i$. Using this observation, we can construct inductively a sequence
$x_0,x_1,\dotsc,x_D$, where $x_0$ is the root of $M$ and such that
\begin{itemize}
\item $x_{i+1}$ is a child of $x_i$,
\item neither $x_{i+1}$ nor any of its offsprings belong to $M_i$,
\end{itemize}
for every $ i \leq D-1$.
Let $i \leq D-1$ and let $x \in M_{i}$. 
Since $x$ is not an offspring of $x_{i+1}$ 
we have $d(x , x_D ) \geq  E \cdot 2^{-i+1}$. Thus
\[
\sum_{i=0}^\infty  2^i d(x_D , M_i ) \geq  E \sum_{i=0}^{D-1} 2^i 2^{-i+1} = 2 E \cdot D , 
\]
which proves the lower bound of~\eqref{example-gamma1}. 
\\
Inequality~\eqref{example-gamma2} is proved exactly the same way. 
\end{proof}
\subsection*{Acknowledgement}
This work owes a lot to stimulating discussions the author had 
with Guillaume Aubrun. 

 \end{document}